\numberwithin{equation}{section}
\newtheorem{Theorem}{Theorem}[section]
\newtheorem{Lemma}[Theorem]{Lemma}
\theoremstyle{definition}
\newtheorem{Example}[Theorem]{Example}
\newcommand{\R}{\mathbb{R}}
\renewcommand{\epsilon}{\varepsilon}
\renewcommand{\phi}{\varphi}
\renewcommand{\[}{\begin{equation}}
\renewcommand{\]}{\end{equation}}
\title{Eigenvalue estimates on quantum graphs}
\author{Sinan Ariturk}
\address{Pontif\'icia Universidade Cat\'olica do Rio de Janeiro, Brazil}
\email{ariturk@mat.puc-rio.br}
\begin{document}

\maketitle

\begin{abstract}
On a finite connected metric graph, we establish upper bounds for the eigenvalues of the Laplacian.
These bounds depend on the length, the Betti number, and the number of pendant vertices.
For trees, these estimates are sharp.
We also establish sharp upper bounds for the spectral gap of the complete graph $K_4$.
The proofs are based on estimates for eigenvalues on graphs with Dirichlet conditions imposed at the pendant vertices.
\end{abstract}

\section{Introduction}

In this article, we study eigenvalues of finite quantum graphs.
A quantum graph consists of a graph, a metric, and a differential operator.
Let $G$ be a finite connected graph, possibly with loops and multiple edges.
Assume $G$ contains at least one edge, and let $E$ be the edge set of $G$.
A metric on $G$ is a function $\ell: E \to (0, \infty)$, assigning a finite positive length to each edge.
The pair $(G, \ell)$ is called a finite metric graph.
We identify each edge of $G$ with an interval in $\R$ of the same length.
We consider the eigenvalue problem for the Laplacian with Neumann vertex conditions, i.e.
\[
\label{laplaceneumann}
	\begin{cases}
		-f'' = \lambda f & \text{ over each edge} \\
		\sum_{e \sim v} f'(v) = 0 & \text{ at each vertex $v$} \\
	\end{cases}
\]
The sum in the Neumann condition at a vertex $v$ is taken over all edges $e$ which are incident to $v$, and the derivatives are taken in the direction away from $v$ into the edge $e$.
Note that a loop at $v$ contributes two terms to this sum, one in each direction.
Additionally, we require $f$ to be continuous over $G$.
These eigenvalues form a sequence which can be indexed so that
\[
	0 = \lambda_0(G, \ell) < \lambda_1(G, \ell) \le \lambda_2(G, \ell) \le \ldots
\]
The eigenvalue $\lambda_1$ is also called the spectral gap.

The dependence of the eigenvalues $\lambda_j(G,\ell)$ on the metric graph $(G, \ell)$ is complicated. 
We study bounds for the eigenvalues $\lambda_j(G, \ell)$ in terms of the graph $G$ and the length of the graph, defined by
\[
	L(G, \ell) = \sum_{e \in E} \ell(e)
\]
For a finite graph $G$, define $\Lambda_j(G)$ to be the smallest number such that for all metrics $\ell$ on $G$,
\[
\label{biglambda}
	\lambda_j(G,\ell) \le \frac{\Lambda_j(G)}{L(G,\ell)^2}
\]
Also define $\mu_j(G)$ to be the largest number such that for all metrics $\ell$ on $G$,
\[
\label{lilmu}
	\lambda_j(G,\ell) \ge \frac{\mu_j(G)}{L(G,\ell)^2}
\]

Nicaise~\cite{N}, Friedlander~\cite{F}, and Kurasov and Naboko~\cite{KN} showed that $\mu_1(G) \ge \pi^2$ for any $G$.
Moreover, Band and Levy \cite{BL} showed that if $G$ has a bridge, then $\mu_1(G)=\pi^2$.
They also showed that if $G$ is 2-edge-connected, then $\mu_1(G)=4 \pi^2$.
For a finite graph $G$, let $m(G)$ be the size of $G$, i.e. the number of edges.
Kennedy, Kurasov, Malenova, and Mugnolo~\cite{KKMM} showed that
\[
\label{kkmmlj}
	\Lambda_1(G) \le m(G)^2 \pi^2
\]
Moreover, equality hold in \eqref{kkmmlj} if and only if $G$ is a flower graph with one vertex and $m(G)$ loops or a dipole graph with two vertices and no loops.
For a finite graph $G$, let $p(G)$ be the number of pendant vertices.
Band and Levy~\cite{BL} showed that, if $m(G) \ge 3$, then 
\[
	\Lambda_1(G) \le \bigg( m(G) - \frac{p(G)}{2} \bigg)^2 \pi^2
\]
If $G$ is a finite tree, then Rohleder \cite{R} established improved bounds on $\Lambda_1(G)$.
He also gave upper bounds for higher eigenvalues.
Band and Levy~\cite{BL} improved these estimates by replacing the size of $G$ with the number of leaves $p(G)$.
If $G$ is a finite tree, they showed that
\[
\label{rbl}
	\Lambda_1(G) = \frac{p(G)^2 \pi^2}{4}
\]
Moreover, if $G$ is a finite tree, then for $j \ge 1$,
\[
\label{rblj}
	\Lambda_j(G) \le \frac{j^2 p(G)^2 \pi^2}{4}
\]

However, if $G$ is a finite tree with $p(G) \ge 3$ and $j \ge 2$, then equality is not attained in \eqref{rblj}.
In the following theorem, we determine $\Lambda_j(G)$ for $j \ge 2$ when $G$ is a finite tree.

\begin{Theorem}
\label{thmtree}
Let $G$ be a finite tree containing at least one edge.
For $j \ge 1$,
\[
\label{thmtree1}
	\Lambda_j(G) = \bigg( j -1 + \frac{p(G)}{2} \bigg)^2 \pi^2
\]
\end{Theorem}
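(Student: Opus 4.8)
The plan is to establish the two inequalities $\Lambda_j(G)\le\big(j-1+\tfrac{p(G)}{2}\big)^2\pi^2$ and $\Lambda_j(G)\ge\big(j-1+\tfrac{p(G)}{2}\big)^2\pi^2$ separately, writing $p=p(G)$ and $L=L(G,\ell)$. The engine for the upper bound is the announced estimate under Dirichlet conditions at pendant vertices, which I would isolate as a lemma: \emph{if $T$ is a finite metric tree with $d\ge1$ pendant vertices carrying the Dirichlet condition $f=0$ and $n$ pendant vertices carrying the Neumann condition $f'=0$ (and the standard continuity--Kirchhoff conditions elsewhere), and $\nu_1(T)$ is its lowest eigenvalue, then $\sqrt{\nu_1(T)}\,L(T)\le\big(d-1+\tfrac{n}{2}\big)\pi$.}

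For the upper bound on $\Lambda_j(G)$, fix a metric $\ell$ on $G$. By a standard perturbation argument --- continuity of $\lambda_j(G,\cdot)$ and $L(G,\cdot)$, plus the fact that the asserted inequality is closed under limits --- it suffices to treat $\ell$ for which $\lambda_j(G,\ell)$ is simple and its eigenfunction $f$ vanishes only at finitely many points, none a vertex (zeros on an edge are automatically simple). By the nodal theorem on metric trees $f$ then has exactly $j$ zeros, and cutting $G$ at these $j$ points produces $j+1$ metric subtrees $T_0,\dots,T_j$. Each leaf of $T_i$ is either one of the $p_i$ original pendant vertices of $G$ in $T_i$ (a Neumann leaf) or one of the $d_i$ cut points (a Dirichlet leaf, since $f=0$ there); hence $\sum_i p_i=p$, $\sum_i d_i=2j$, $\sum_i L(T_i)=L$, and $d_i\ge1$ for every $i$ because the quotient tree on the pieces is connected. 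On $T_i$ the restriction $f|_{T_i}$ is sign-definite and solves the Laplace eigenvalue problem with Dirichlet conditions at the cut points and Neumann conditions at the original leaves, so it is a ground state and $\lambda_j(G,\ell)=\nu_1(T_i)$. Applying the lemma to each piece and summing,
\[
	\sqrt{\lambda_j(G,\ell)}\,L=\sum_i\sqrt{\nu_1(T_i)}\,L(T_i)\le\sum_i\Big(d_i-1+\tfrac{p_i}{2}\Big)\pi=\Big(2j-(j+1)+\tfrac{p}{2}\Big)\pi=\Big(j-1+\tfrac{p}{2}\Big)\pi,
\]
which is the desired bound; for $j=1$ this recovers \eqref{rbl}.

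To prove the lemma I would apply the Prüfer transformation to the ground state $u$ of $T$ (strictly positive away from the Dirichlet leaves, by the usual argument at internal vertices and Neumann leaves), with $k=\sqrt{\nu_1(T)}$. On each edge $u^2+(u'/k)^2$ is constant, so $(u,u'/k)$ moves on a circle at angular speed $k$, and the Prüfer angle increases by exactly $k\ell_e$ along an edge of length $\ell_e$; summed over edges, the total increase is $kL(T)$. Positivity of $u$ confines the Prüfer angle to one open interval of length $\pi$ on each edge and forces it to equal $0$ at a chosen Dirichlet root, $\pi$ on arrival at any other Dirichlet leaf, and $\tfrac\pi2$ at a Neumann leaf, while the Kirchhoff condition at an internal vertex reads $\cot\beta_{\mathrm{in}}=\sum_a\cot\beta_{\mathrm{out}}^{(a)}$, relating the Prüfer angle of the edge toward the root to those of the edges away from it. Rooting $T$ at a Dirichlet leaf and telescoping the edge increases over the vertices, $kL(T)$ equals the sum of leaf contributions --- $\pi$ for each of the $d-1$ non-root Dirichlet leaves and $\tfrac\pi2$ for each of the $n$ Neumann leaves --- plus, at each internal vertex, the term $\beta_{\mathrm{in}}-\sum_a\beta_{\mathrm{out}}^{(a)}$. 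The elementary inequality $\cot x+\cot y\ge\cot(x+y)$ for $x,y,x+y\in(0,\pi)$ (whose proof reduces to the identity that the relevant numerator equals $\tfrac12(\sin^2x+\sin^2y+\sin^2(x+y))\ge0$) iterates to $\sum_a\cot x_a\ge\cot\big(\sum_a x_a\big)$ when $\sum_a x_a<\pi$; together with the Kirchhoff identity (and trivially when $\sum_a\beta_{\mathrm{out}}^{(a)}\ge\pi$) this makes every internal-vertex term $\le0$, so $kL(T)\le(d-1)\pi+n\cdot\tfrac\pi2$.

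For the lower bound I would produce nearly extremal metrics. On the broom $B_p$ --- the star with $p$ edges, one of length $2j-1$ and the other $p-1$ of length $1$ --- separating eigenfunctions under the symmetry permuting the $p-1$ unit edges, and a counting argument, give $\lambda_j(B_p)=\tfrac{\pi^2}{4}$, while $L(B_p)=2\big(j-1+\tfrac{p}{2}\big)$, so $\lambda_j(B_p)L(B_p)^2=\big(j-1+\tfrac{p}{2}\big)^2\pi^2$. For a general tree $G$ with $p$ leaves, take the metric $\ell_\varepsilon$ giving one pendant edge length $2j-1$, the remaining $p-1$ pendant edges length $1$, and every non-pendant edge length $\varepsilon$; as $\varepsilon\to0$ the internal subtree of $G$ collapses to a point, $(G,\ell_\varepsilon)$ converges spectrally to $B_p$, and $\lambda_j(G,\ell_\varepsilon)L(G,\ell_\varepsilon)^2\to\big(j-1+\tfrac{p}{2}\big)^2\pi^2$, giving $\Lambda_j(G)\ge\big(j-1+\tfrac{p}{2}\big)^2\pi^2$. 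The main obstacle is the lemma: one must control the Prüfer angle globally across branchings --- in particular verify that for a ground state it never winds past an endpoint of its confining interval in the interior of an edge --- and make the perturbation reduction (zeros at vertices, multiple $\lambda_j$) and the Kirchhoff bookkeeping at high-degree vertices rigorous; a secondary point is the verification that the broom realizes exactly the $j$-th eigenvalue for every $p$.
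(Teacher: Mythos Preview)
Your proposal is correct in outline, and the lower bound via the broom and edge-contraction/continuity is exactly what the paper does (Example~\ref{sharpstar} and the Band--Levy continuity result).  For the upper bound, however, you take a genuinely different route.

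The paper never invokes nodal domains or Pr\"ufer angles.  Instead it proves the Dirichlet estimate of Lemma~\ref{lemdir} (which is Lemma~\ref{lemdtree} for trees) by a direct induction on $j+|D|$, using only the variational characterization and explicit test functions: in each of four cases one deletes or subdivides a leaf edge, extends test functions by a half-sine on the removed piece, and appeals to the inductive hypothesis.  Theorem~\ref{thmneu} is then derived from Lemma~\ref{lemdir} by a second induction on $p(G)$, and Theorem~\ref{thmtree} follows as the special case $\beta(G)=0$.  Your argument instead cuts $(G,\ell)$ at the $j$ zeros of a generic $j$-th eigenfunction (using the Courant-sharpness of trees), reducing to your mixed-boundary lemma $\sqrt{\nu_1(T)}\,L(T)\le(d-1+n/2)\pi$ on each nodal piece, and proves that lemma by a global Pr\"ufer-angle telescoping controlled by the cotangent inequality.

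Both approaches are sound.  The paper's is more elementary---it needs no external input beyond the Rayleigh quotient---and its Dirichlet lemma bounds all $\lambda_j$ directly, which is what makes the extension to $\beta(G)\ge1$ in Theorem~\ref{thmneu} immediate.  Your approach is more structural: it explains the constant $j-1+p/2$ as a phase budget, and your lemma (allowing mixed Neumann/Dirichlet leaves) is a statement the paper never isolates.  The cost is that you must import the nodal-count theorem for trees and justify the genericity reduction and the Pr\"ufer bookkeeping at branch vertices; you have correctly identified these as the points requiring care, and they can indeed be made rigorous along the lines you sketch.
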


Note that for the case $j=1$, we recover \eqref{rbl}.
For a finite connected graph which is not a tree, we establish upper bounds for $\Lambda_j(G)$ in terms of the number of pendant vertices and the Betti number.
Recall that if a finite connected graph $G$ has $m(G)$ edges and $n(G)$ vertices, then the Betti number $\beta(G)$ is given by
\[
	\beta(G) = m(G) - n(G) + 1
\]
Equivalently, the Betti number is the smallest number of edges which can be deleted from $G$ to obtain a tree.
In particular, a finite connected graph is a tree if and only if its Betti number is zero.

\begin{Theorem}
\label{thmneu}
Let $G$ be a finite connected graph containing at least one edge.
For $j \ge 1$,
\[
\label{thmneu1}
	\Lambda_j(G) \le \bigg( j -1 + 2\beta(G) + \frac{p(G)}{2} \bigg)^2 \pi^2
\]
\end{Theorem}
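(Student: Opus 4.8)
The plan is to reduce Theorem~\ref{thmneu} to the tree case, Theorem~\ref{thmtree}, by cutting edges and keeping track of how the eigenvalue index and the number of pendant vertices change. Fix a metric $\ell$ on $G$ and write $\beta = \beta(G)$, $p = p(G)$, $L = L(G,\ell)$. If $\beta = 0$ then $G$ is a tree and \eqref{thmneu1} is immediate from Theorem~\ref{thmtree}, so assume $\beta \ge 1$.

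First I would pick a set of $\beta$ edges $e_1,\dots,e_\beta$ whose deletion turns $G$ into a spanning tree; such a set exists by the description of the Betti number recalled above. Rather than deleting $e_i$, I cut it at an interior point: an edge $e_i = (u,v)$ of length $\ell(e_i)$ is replaced by two edges $(u,w_i')$ and $(v,w_i'')$ of lengths $\ell(e_i)/2$, with $w_i'$ and $w_i''$ new vertices. Carrying out all $\beta$ cuts produces a finite metric graph $(T,\ell')$ which is a tree: cutting an edge never creates a cycle, and since none of the $e_i$ is a bridge it never disconnects the graph. Moreover $L(T,\ell') = L$, and $p(T) \le p + 2\beta$, because each cut leaves the degrees of the original vertices unchanged while introducing two new pendant vertices.

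Next I would compare the spectra of $(G,\ell)$ and $(T,\ell')$. The quadratic form $f \mapsto \int |f'|^2$ is literally the same on both graphs, and the form domain of $(G,\ell)$ sits inside that of $(T,\ell')$ as the subspace cut out by the $\beta$ continuity conditions at the new vertices; in particular it has codimension at most $\beta$. A min-max argument then yields the interlacing $\lambda_j(G,\ell) \le \lambda_{j+\beta}(T,\ell')$: taking a $(j+\beta+1)$-dimensional subspace $W$ of the form domain of $(T,\ell')$ that realises $\lambda_{j+\beta}(T,\ell')$, its intersection with the form domain of $(G,\ell)$ still has dimension at least $j+1$ and the same Rayleigh quotients. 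Applying Theorem~\ref{thmtree} to the tree $T$ with index $j+\beta$ and using $p(T) \le p + 2\beta$ gives
\[
	\lambda_j(G,\ell) \le \lambda_{j+\beta}(T,\ell') \le \Big( (j+\beta) - 1 + \frac{p(T)}{2} \Big)^2 \frac{\pi^2}{L^2} \le \Big( j - 1 + 2\beta + \frac{p}{2} \Big)^2 \frac{\pi^2}{L^2}.
\]
Since $\ell$ was arbitrary, this is exactly \eqref{thmneu1}.

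The one step that needs care is the interlacing: I must verify that cutting an edge of a metric graph at an interior point and declaring the two new degree-one vertices to carry the (Neumann) condition is precisely a codimension-one enlargement of the form domain which leaves the form itself unchanged, so that the single-cut interlacing $\lambda_j \le \lambda_{j+1}$ holds and can be iterated $\beta$ times. Everything else — the existence of the cut set, the count $p(T)\le p+2\beta$, and the final arithmetic — is routine bookkeeping.
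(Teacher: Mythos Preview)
Your argument is correct and takes a genuinely different, more direct route than the paper's.  The paper does not reduce Theorem~\ref{thmneu} to Theorem~\ref{thmtree}; instead it first proves the Dirichlet estimate Lemma~\ref{lemdir} (tree case by a four-way case analysis on leaf-edge lengths, then general $\beta$ by cutting a cycle edge and imposing Dirichlet conditions at the two new leaves), and then deduces Theorem~\ref{thmneu} from Lemma~\ref{lemdir} by a separate induction on $p(G)$ in which short pendant edges are deleted and long ones are replaced by unit-length stubs carrying explicit test functions.  Your single cut-and-interlace step collapses all of this: the $\beta$-fold rank-one interlacing for Neumann form domains, followed by the tree bound, together with the bookkeeping $p(T)\le p+2\beta$, gives the result in one line.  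The gain is brevity and conceptual clarity.

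The one thing to watch is logical order.  In the paper Theorem~\ref{thmtree} is proved \emph{after} Theorem~\ref{thmneu} and is in fact deduced from it (its proof opens with ``By Theorem~\ref{thmneu}''), so invoking Theorem~\ref{thmtree} here is circular in the paper's organization.  The fix is easy: what you actually use is only the upper bound $\Lambda_k(T)\le (k-1+p(T)/2)^2\pi^2$ for trees, i.e.\ precisely the $\beta=0$ case of Theorem~\ref{thmneu}, and that case can be established independently---it is exactly what the paper's Lemma~\ref{lemdtree} together with the pendant-edge induction for $\beta=0$ provide.  Once you state that you are assuming the tree case and then extending to positive $\beta$ by your interlacing argument, the proof stands on its own.
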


If $\beta(G) \ge 1$, then we expect that the bound \eqref{thmneu1} can be improved.
For certain graphs, we are able to obtain sharper estimates.
In particular, we determine the spectral gap of a graph $G$ which admits an induced tree of order $n(G)-1$, provided $G$ has no loops or pendant vertices.
Here $n(G)$ is the order of $G$, i.e. the number of vertices.

\begin{Theorem}
\label{thminforest}
Let $G$ be a finite connected graph containing at least one edge.
Assume $G$ has no loops and no pendant vertices.
Also assume $G$ admits an induced tree of order $n(G)-1$.
Then
\[
\label{thminforest1}
	\Lambda_1(G) = \Big(1+ \beta(G) \Big)^2 \pi^2
\]
\end{Theorem}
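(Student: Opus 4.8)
The plan is to sandwich $\Lambda_1(G)$ between two copies of $(\beta(G)+1)^2\pi^2$. Set $q:=\beta(G)+1$. The structural input is that, by hypothesis, there is a vertex $v$ with $G-v=T$ a tree on $n(G)-1$ vertices; counting edges ($m(T)=n(G)-2$ and $\beta(G)=m(G)-n(G)+1$) shows $v$ is joined to $T$ by exactly $q$ edges $e_1,\dots,e_q$, and $G$ has no further edges. Since $G$ has no loops, each $e_s$ runs from $v$ to some vertex $w_s$ of $T$; since $p(G)=0$, every leaf of $T$ is one of the $w_s$.

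\textbf{Lower bound.} Contracting the connected subtree $T$ to a single point turns $G$ into the dipole $D_q$ on two vertices joined by $q$ parallel edges, with no loops. For a metric $\ell'$ on $D_q$ and $\delta>0$, equip $G$ with $\ell'$ on the edges $e_1,\dots,e_q$ and length $\delta$ on every edge of $T$. As $\delta\to0$ the total length tends to $L(D_q,\ell')$ and, by the standard continuity of $\lambda_1$ under contraction of a subtree (an oscillation/min--max estimate, since a normalized eigenfunction with bounded eigenvalue varies by $O(\sqrt{\mathrm{length}(T)})$ on $T$), $\lambda_1(G,\ell)\to\lambda_1(D_q,\ell')$. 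Hence $\Lambda_1(G)\ge\Lambda_1(D_q)$, and by Kennedy--Kurasov--Malenov\'a--Mugnolo \cite{KKMM} a loopless two-vertex dipole attains equality in \eqref{kkmmlj}, so $\Lambda_1(D_q)=m(D_q)^2\pi^2=q^2\pi^2=(\beta(G)+1)^2\pi^2$.

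\textbf{Upper bound.} Cut the vertex $v$ apart into $q$ pendant vertices $v_1,\dots,v_q$, with $v_s$ the endpoint of $e_s$. This yields a finite tree $\hat G$ of the same length $L$ whose leaf set is exactly $\{v_1,\dots,v_q\}$ (the no-pendant hypothesis is used precisely to guarantee no leaf of $T$ survives as a leaf of $\hat G$). Impose Dirichlet conditions at $v_1,\dots,v_q$, with eigenvalues $0<\nu_1\le\nu_2\le\cdots$ and corresponding eigenfunctions $g_1,g_2$. Every $g$ in the two-dimensional span of $g_1,g_2$ vanishes at each $v_s$, hence re-glues to a continuous $H^1$ function $\tilde g$ on $G$ with the same Dirichlet energy and the same $L^2$-norm; the functional $g\mapsto\int_G\tilde g$ has a nonzero kernel on this span, and any $g$ in that kernel gives a trial function for $\lambda_1(G,\ell)$, so $\lambda_1(G,\ell)\le\nu_2(\hat G,\ell)$.

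\textbf{The crux.} It remains to show $\nu_2\le q^2\pi^2/L^2$ for a finite tree with $q$ leaves and Dirichlet conditions at the leaves; I expect this Dirichlet eigenvalue estimate, rather than the surgery above, to be the main obstacle, and it is exactly the kind of statement on which the paper's method is built. It is sharp on the equilateral $q$-star, where the second Dirichlet eigenvalue equals $q^2\pi^2/L^2$ (its eigenfunctions vanish at the centre and split into Dirichlet interval problems on the legs). For a general tree one argues by exhibiting a two-dimensional space of trial functions supported on a suitable long path, or on the longest pendant branches, whose Rayleigh quotients are controlled by the bound $\max_s\mathrm{dist}(r,u_s)\ge L/q$ valid at any vertex $r$ (every edge lies on a path from $r$ to some leaf). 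Granting this, $\lambda_1(G,\ell)\le(\beta(G)+1)^2\pi^2/L(G,\ell)^2$ for all $\ell$, so $\Lambda_1(G)\le(\beta(G)+1)^2\pi^2$, and together with the lower bound this proves \eqref{thminforest1}.
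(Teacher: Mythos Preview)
Your architecture is exactly the paper's: for the upper bound you split the distinguished vertex $v$ into $q=\beta(G)+1$ leaves, pass to the second Dirichlet eigenvalue on the resulting tree, and invoke a bound $\nu_2\le q^2\pi^2/L^2$; for the lower bound you contract the induced tree to obtain the dipole $D_q$ and use continuity under edge contraction together with $\Lambda_1(D_q)=q^2\pi^2$. The paper does precisely this (its upper bound is phrased as $\lambda_1(G,\ell)\le\lambda_2(G,\ell;v_1)=\lambda_2(A,\ell_A;D)$, and the lower bound quotes Example~\ref{dipole} and the Band--Levy continuity result).

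The gap is in ``the crux''. The inequality $\nu_2(\hat G,\ell;D)\le |D|^2\pi^2/L^2$ is the $j=2$, $\beta=0$ case of Lemma~\ref{lemdir} (equivalently Lemma~\ref{lemdtree}), and the paper spends real work on it: an induction on $j+|D|$ that splits into four cases according to the lengths of the leaf edges, repeatedly peeling off a leaf edge and invoking Lemma~\ref{dirichletdelete} or a Dirichlet comparison. Your sketch does not substitute for this. The pigeonhole bound $\max_s\operatorname{dist}(r,u_s)\ge L/q$ is correct, but it naturally controls only $\nu_1$: a single long path from an interior vertex $r$ to a leaf supports one half-sine trial function, not a two-dimensional space. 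To get $\nu_2$ you would need either a leaf--to--leaf path of length $\ge 2L/q$ or two disjoint long pendant segments, and even then the phrase ``supported on a suitable long path'' hides a genuine obstruction: a trial function that is nonzero on an interior path cannot be extended by zero to the rest of the tree, because at every branching vertex along the path continuity forces the value on the side branches to be nonzero, and those branches terminate at Dirichlet leaves. One is therefore pushed back to an inductive argument of the kind the paper carries out. As written, your proof is complete modulo Lemma~\ref{lemdtree}, but that lemma is not established by the heuristic you give.
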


We also determine the spectral gap of the complete graph $K_4$.

\begin{Theorem}
\label{thmk4}
The spectral gap of the complete graph $K_4$ is given by
\[
\label{thmk41}
	\Lambda_1(K_4) = 16 \pi^2
\]
\end{Theorem}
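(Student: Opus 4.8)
Here is how I would approach the final statement, Theorem~\ref{thmk4} (equation \eqref{thmk41}). The plan is to prove $\Lambda_1(K_4)\ge 16\pi^2$ and $\Lambda_1(K_4)\le 16\pi^2$ separately.

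For the lower bound I would degenerate $K_4$ onto a dipole graph. Label the vertices $1,2,3,4$ and, for small $\epsilon>0$, let $\ell_\epsilon$ give length $\epsilon$ to the two edges of the perfect matching $\{13,24\}$ and length $(1-2\epsilon)/4$ to each of the other four edges, so that $L(K_4,\ell_\epsilon)=1$. Contracting $\{13\}$ and $\{24\}$ identifies $1$ with $3$ and $2$ with $4$ and turns the remaining four edges into parallel edges between the two resulting vertices; hence $(K_4,\ell_\epsilon)$ converges as $\epsilon\to 0^+$ to the dipole graph $D_4$ with four parallel edges of common length $1/4$. By the standard continuous dependence of quantum graph eigenvalues on the edge lengths, valid also in the limit in which an edge is contracted, $\lambda_1(K_4,\ell_\epsilon)\to\lambda_1(D_4)$, and a direct computation --- the function equal to $\sin(4\pi x)$ on one edge of $D_4$, to $-\sin(4\pi x)$ on a second, and to $0$ on the other two is a Neumann eigenfunction orthogonal to the constants, and one checks there is no smaller positive eigenvalue --- gives $\lambda_1(D_4)=16\pi^2$. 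Thus $\Lambda_1(K_4)\ge 16\pi^2$.

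For the upper bound, normalize $L=1$; I must show $\lambda_1(K_4,\ell)\le 16\pi^2$ for every metric. The workhorse is a family of admissible test functions indexed by the three perfect matchings $M_1=\{12,34\}$, $M_2=\{13,24\}$, $M_3=\{14,23\}$: for $M=\{e,e'\}$ let $f_M$ equal $\sin(\pi x/\ell(e))$ on $e$, $-\sin(\pi x/\ell(e'))$ on $e'$, and $0$ on the remaining four edges. Each $f_M$ is continuous on $K_4$ (it vanishes at all four vertices) and lies in the form domain, and the variational principle yields $\lambda_1(K_4,\ell)\le\Phi(\ell(e),\ell(e'),L)$ for an explicit $\Phi$ --- for instance $\Phi=\pi^2/\ell(e)^2$ when $\ell(e)=\ell(e')$ --- with $\Phi=16\pi^2$ exactly at the extremal configuration above (there $f_{M_1}$ is, asymptotically, the first eigenfunction). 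This already gives $\lambda_1(K_4,\ell)\le 16\pi^2$ unless $\Phi>16\pi^2$ for all three matchings, i.e.\ unless $\ell$ is \emph{balanced}; those metrics remain. For them I would argue by continuity and compactness on the closed simplex of normalized metrics: $\ell\mapsto\lambda_1(K_4,\ell)$ extends continuously to this simplex, and on the face where a nonempty edge set $F$ has length zero it equals the spectral gap of the contracted graph $K_4/F$. Up to isomorphism these contractions are the dipole graphs $D_3,D_4$, the flower graphs with one, two or three loops, and two further Betti-number-three multigraphs, one on three vertices and one on two; the dipoles and flowers obey $\Lambda_1\le 16\pi^2$ by \eqref{kkmmlj}, and the two exceptional multigraphs are handled by the very same scheme, their own proper contractions being only dipoles and flowers with at most three edges, so the recursion terminates (the edge count strictly decreases). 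Hence $\sup_\ell\lambda_1(K_4,\ell)$ is the larger of $16\pi^2$ and the supremum over interior critical points of $\lambda_1$; at such a point the Hadamard-type variational formula for $\partial\lambda_1/\partial\ell(e)$ forces the relevant energy integral of the eigenfunction to be equidistributed over the edges, which together with the test-function bound forces $\lambda_1\le 16\pi^2$.

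I expect the last step --- the compactness reduction together with the analysis of interior critical points --- to be the main obstacle: one must justify continuity of $\lambda_1$ all the way to the degenerate metrics with the correct contracted-graph values, and then rule out interior critical points above $16\pi^2$ by pushing the variational formula through the case distinctions of which edges are long and which are short. As an alternative I would try to adapt the proof of Theorem~\ref{thminforest}: although the induced subgraph of $K_4$ on $n(K_4)-1=3$ vertices is a triangle rather than a tree, it is a tree with one extra edge, and if that extra edge costs little enough the argument there should still deliver $(1+\beta(K_4))^2\pi^2=16\pi^2$.
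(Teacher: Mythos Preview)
Your lower-bound argument agrees with the paper's: both degenerate $K_4$ onto the four-edge dipole by contracting a perfect matching and invoke the Band--Levy continuity result together with the explicit dipole computation (Example~\ref{dipole}).

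For the upper bound, however, what you have written is a strategy rather than a proof, and the decisive step is missing. The matching test functions $f_M$ are a natural first move, but (as you implicitly acknowledge by writing $\Phi(\ell(e),\ell(e'),L)$) they are not orthogonal to constants unless $\ell(e)=\ell(e')$, and after mean subtraction the resulting bound need not be $\le 16\pi^2$ for any of the three matchings. You therefore fall back on compactness plus an interior critical-point analysis. The gap is exactly where you locate it: you have not shown that every interior critical point of $\ell\mapsto\lambda_1(K_4,\ell)$ on the open simplex satisfies $\lambda_1\le 16\pi^2$. The Hadamard first-variation identity does force the edge-wise quantities $\int_e|f'|^2-\lambda_1\int_e f^2$ to be equal at a simple critical eigenvalue, but you give no argument linking that equidistribution to the matching bound, and no classification of such critical metrics. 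Moreover, at the expected extremizer $\lambda_1$ is multiple, so the smooth variational formula does not apply directly and one only has one-sided directional derivatives; this has to be handled, and you do not. Your alternative of adapting Theorem~\ref{thminforest} is likewise only a hope: the induced subgraph of $K_4$ on three vertices is a triangle, not a tree, and the proof of Theorem~\ref{thminforest} genuinely uses that splitting a single vertex already produces a tree.

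The paper takes a completely different route for the upper bound and avoids compactness and critical points entirely. After normalizing $L=4$ it notes that some $3$-cycle, say through $x,y,z$, has length $\ge 2$, and then performs a finite case analysis on the lengths of the three edges $wx,wy,wz$ at the remaining vertex (Lemmas~\ref{k4long}--\ref{k4abcd}). In each case it constructs by hand an explicit two-dimensional test space in $H^1(K_4,\ell)$ with Rayleigh quotient $\le\pi^2$, typically by taking a first eigenfunction on the long cycle or on suitable subtrees, extending by sine profiles along the remaining edges, and then splitting into positive and negative parts. One case (all of $wx,wy,wz$ short) makes essential use of the sharp claw estimate Lemma~\ref{lemclaw}. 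This approach is more laborious and less conceptual than the one you propose, but every step is elementary and complete; your route might be made to work, but as written it is not a proof.
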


The proofs of these results are based on bounds for eigenvalues on finite metric graphs with Dirichlet conditions imposed at some vertices.
Fix a finite connected metric graph $(G, \ell)$.
Let $D$ be a non-empty subset of the vertex set $V$, and let $N$ be the complement of $D$ in $V$.
The eigenvalue equation for the Laplacian with Dirichlet conditions imposed at the vertices in $D$ and Neumann conditions imposed at the vertices in $N$ is
\[
\label{laplacedirichlet}
	\begin{cases}
		-f'' = \lambda f & \text{ over each edge} \\
		f(v) = 0 & \text{ at each vertex $v$ in $D$} \\
		\sum_{e \sim v} f'(v) = 0 & \text{ at each vertex $v$ in $N$} \\
	\end{cases}
\]
We also require $f$ to be continuous over $G$.
These eigenvalues form a sequence which can be indexed so that
\[
	0 < \lambda_1(G, \ell; D) \le \lambda_2(G, \ell; D) \le \lambda_3(G, \ell; D) \le \ldots
\]
If $v_1, v_2,\ldots,v_d$ are the vertices in $D$, then we will also use the notation $\lambda_j(G, \ell; v_1, v_2, \ldots, v_d)$ in place of $\lambda_j(G, \ell; D)$.
Let $|D|$ denote the number of vertices in $D$.
The following lemma establishes bounds for eigenvalues with Dirichlet conditions imposed at the pendant vertices.

\begin{Lemma}
\label{lemdir}
Let $(G, \ell)$ be a finite connected metric graph containing at least one edge.
Let $D$ be a non-empty set of vertices in $G$ containing every pendant vertex.
For $j \ge 1$,
\[
\label{lemdir1}
	\lambda_j(G, \ell; D) \le \frac{(j-2+2\beta(G)+|D|)^2 \pi^2}{L(G, \ell)^2}
\]
If $G$ has no pendant vertices, then for $j \ge 1$,
\[
\label{lemdir2}
	\lambda_j(G, \ell) \le \frac{(j-1+2\beta(G))^2 \pi^2}{L(G, \ell)^2}
\]
\end{Lemma}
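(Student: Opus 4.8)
The plan is to build an explicit test function on $(G,\ell)$ and invoke the min-max principle for the Dirichlet eigenvalue problem \eqref{laplacedirichlet}. The guiding idea is that, by deleting $\beta(G)$ edges, we can reduce $G$ to a tree $T$; on a tree with all leaves put into the Dirichlet set, eigenfunctions of the Dirichlet Laplacian can be taken to be sine functions on each edge, and a careful count relates the number of available "sine half-waves" to $|D|$, the number of leaves, and the length. When we reinstate the deleted edges we must impose extra constraints (continuity and the possibility of nonzero boundary data at the two endpoints of each reinstated edge), and each deleted edge costs us at most two units in the eigenvalue index — this is the source of the $2\beta(G)$ term. So the first step is: fix a spanning tree structure by choosing a set $S$ of $\beta(G)$ edges whose removal yields a connected tree $T$ containing all pendant vertices of $G$ as leaves, with $D$ still containing every pendant vertex of $T$.

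Second, I would set up the variational characterization: $\lambda_j(G,\ell;D)$ is the minimum over $j$-dimensional subspaces $\mathcal{V}$ of the form domain (continuous $H^1$ functions vanishing on $D$) of $\max_{f\in\mathcal V}\|f'\|^2/\|f\|^2$. It therefore suffices to exhibit a $j$-dimensional space of admissible functions on which the Rayleigh quotient is at most $(j-2+2\beta(G)+|D|)^2\pi^2/L(G,\ell)^2$. The natural construction: on the tree $T$, partition the total length $L(G,\ell)$ into $k := j-2+2\beta(G)+|D|$ subintervals by placing $k-1$ interior "cut points" along edges (walking through $T$ in an Eulerian-type traversal of its $p$ leaf-to-leaf paths), and on each subinterval put a function equal to $\sin(k\pi t/L)$ suitably reflected so that it is continuous across vertices of $T$ and vanishes at the Dirichlet vertices; each such bump has Rayleigh quotient exactly $k^2\pi^2/L^2$. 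One then checks that after accounting for the constraints at the vertices of $D$ (there are $|D|$ of them, but the leaves are "free ends" so they are cheaper than interior Dirichlet points) and the constraints coming from the $\beta(G)$ reinstated edges, the span of these bumps contains a subspace of dimension at least $j$. The second statement \eqref{lemdir2} is the special case $p(G)=0$ combined with a shift in indexing: with no pendant vertices one can take $D$ to be a single vertex (or none, using the Neumann problem directly) and the constant function is available, shifting the count from $j-2+|D|$ with $|D|=1$ to $j-1$, while the $2\beta(G)$ term is unchanged.

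Third, the delicate bookkeeping is the interface between the combinatorial edge-counting on the tree and the analytic half-wave count. I would handle it by the standard device of "doubling": replace each leaf-to-leaf path of $T$ by a full interval, concatenate the $\lceil p/2\rceil$ (or so) paths and the doubling from each cycle edge in $S$ into one long interval of length $2L(G,\ell)$ or thereabouts, lay down $\sin$ functions with the right frequency, and then descend back to $G$ by folding. Keeping precise track of how the folding identifications consume dimensions — each vertex identification and each Dirichlet condition removes at most one dimension, each cycle-closing edge removes at most two — is exactly where the constants $-2$, $2\beta(G)$, and $|D|$ (rather than something weaker) must be squeezed out.

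\textbf{Main obstacle.} The hard part will be the dimension count: showing that the constraints (continuity at all vertices of $G$, vanishing on $D$, and matching across the $\beta(G)$ extra edges) cut down the naive $k$-dimensional space of bumps by at most $k-j$, i.e. that we are left with at least $j$ linearly independent admissible functions with Rayleigh quotient $\le k^2\pi^2/L^2$. This requires arguing that leaves in $D$ are "free" (cost $0$ beyond what is already built in) while each cycle edge genuinely costs $2$, and doing so uniformly in the metric $\ell$ — in particular handling the degenerate configurations where cut points would have to coincide with vertices. I expect to resolve this by a limiting argument, proving the bound first for metrics in general position and then passing to the limit, since $\lambda_j(G,\ell;D)$ depends continuously on $\ell$.
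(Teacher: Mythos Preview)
Your proposal identifies the right ingredients --- break $\beta(G)$ cycles to reach a tree, then exploit sine structure --- but it stops short of a proof: you flag the dimension count as the ``main obstacle'' without resolving it, and your treatment of the $\beta(G)$ deleted edges is ambiguous. The test functions must live on all of $G$, not just on the spanning tree $T$, so ``reinstating'' an edge requires either extending the function across it or absorbing its length into the tree; you never commit to either, and your partition of $L(G,\ell)$ into $k$ pieces ``along $T$'' is inconsistent since $L(T,\ell)<L(G,\ell)$.

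The paper bypasses this obstacle by replacing your global construction with two nested inductions. For the cycle-breaking step it does not delete an edge and pay later; instead it subdivides one cycle edge, imposes a Dirichlet condition at the new vertex $v$, and splits $v$ into two pendant vertices $v_1,v_2$. The resulting graph $A$ is connected with $\beta(A)=\beta(G)-1$, $L(A,\ell_A)=L(G,\ell)$, and $|D_A|=|D|+2$, and one uses only the trivial monotonicity
\[
\lambda_j(G,\ell;D)\le\lambda_j(G,\ell;D\cup\{v\})=\lambda_j(A,\ell_A;D_A).
\]
Since $j-2+2\beta+|D|$ is preserved, induction on $\beta$ reduces \eqref{lemdir1} to the tree case (Lemma~\ref{lemdtree}); for \eqref{lemdir2} the same splitting plus the interlacing $\lambda_j(G,\ell)\le\lambda_{j+1}(G,\ell;v)$ does the job. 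For the tree case the paper again avoids any Eulerian traversal or doubling: it inducts on $j+|D|$, normalizes $L=j+|D|-2$, and runs a four-case analysis on the length of a leaf edge (long: chop off a unit segment and drop $j$ by one; medium: delete via Lemma~\ref{dirichletdelete} and drop $|D|$ by one; short with an incident sibling leaf: delete both and replace by one Dirichlet vertex; short with no sibling: invoke Lemma~\ref{degreetwo} to find a degree-two Dirichlet vertex behind it). Each step lands on a strictly smaller instance, and the base $|D|=2$ is an interval. This inductive route is what makes the sharp constant $j-2+|D|$ fall out without the delicate bookkeeping you anticipated.
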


We use this lemma to prove Theorem \ref{thmtree}, Theorem \ref{thmneu}, and Theorem \ref{thminforest}.
To prove Theorem \ref{thmk4}, we also use the following lemma which establishes upper bounds for the first eigenvalue on the complete bipartite graph $K_{1,3}$ with Dirichlet conditions imposed at the leaves.
These bounds are sharper than \eqref{lemdir1} for many metrics $\ell$ on $K_{1,3}$.

\begin{Lemma}
\label{lemclaw}
Let $D$ be the set of leaves in the complete bipartite graph $K_{1,3}$.
Let $\ell$ be a metric on $K_{1,3}$.
Let $e_1, e_2, e_3$ be the edges of $K_{1,3}$, labelled so that
\[
	\ell(e_1) \ge \ell(e_2) \ge \ell(e_3)
\]
Then
\[
	\lambda_1(K_{1,3}, \ell; D) \le \frac{(10/9)^2 \pi^2}{(\ell(e_1)+\ell(e_3))^2}
\]
\end{Lemma}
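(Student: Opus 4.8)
\emph{Proof proposal.} Write $\ell_i=\ell(e_i)$ and let $v_0$ be the central vertex of $K_{1,3}$. The plan is to replace $\lambda_1(K_{1,3},\ell;D)$ by the square of the first root of a secular equation, use monotonicity in $\ell_2$ to reduce to the case $\ell_2=\ell_3$, and then solve an explicit extremal problem whose answer turns out to sit just below $\tfrac{10}{9}\pi$. Parametrize $e_i$ by $x\in[0,\ell_i]$ with the leaf at $x=0$. A solution of $-f''=k^2f$ ($k>0$) vanishing at the three leaves equals $a_i\sin(kx)$ on $e_i$; continuity at $v_0$ forces $a_i\sin(k\ell_i)$ to be a common value $c$, and for $0<k\ell_i<\pi$ this means $a_i=c/\sin(k\ell_i)$, so the Kirchhoff condition at $v_0$ reads $c\,S(k)=0$ with $S(k):=\sum_{i=1}^{3}\cot(k\ell_i)$. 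On $(0,\pi/\ell_1)$ the function $S$ is continuous and strictly decreasing (each summand has derivative $-\ell_i\csc^2(k\ell_i)<0$), with $S(0^+)=+\infty$ and $S(k)\to-\infty$ as $k\to(\pi/\ell_1)^-$, so $S$ has a unique zero $k_*\in(0,\pi/\ell_1)$; then $k_*\ell_i<\pi$ for all $i$, and taking $c=1$ gives a genuine eigenfunction of eigenvalue $k_*^2$. Hence $\lambda_1(K_{1,3},\ell;D)\le k_*^2$, and it suffices to prove $k_*(\ell_1+\ell_3)\le\tfrac{10}{9}\pi$.

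Since $S'(k_*)<0$, the implicit function theorem makes $k_*=k_*(\ell_1,\ell_2,\ell_3)$ a smooth function of the edge lengths, and as $\partial_{\ell_2}S=-k\csc^2(k\ell_2)<0$ we get $\partial_{\ell_2}k_*=-\partial_{\ell_2}S/\partial_kS<0$. So on the admissible range $\ell_3\le\ell_2\le\ell_1$ the number $k_*$ is largest at $\ell_2=\ell_3$, and since $\tfrac{10}{9}\pi/(\ell_1+\ell_3)$ does not involve $\ell_2$, it is enough to treat the case $\ell_2=\ell_3$.

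Assume now $\ell_2=\ell_3$; write $t=\ell_1\ge s=\ell_3>0$ and $\phi=k_*t$, $\theta=k_*s$, so $0<\theta\le\phi<\pi$, $\cot\phi+2\cot\theta=0$, and $k_*(\ell_1+\ell_3)=\phi+\theta$. Evaluating $S$ at $k=\pi/(2t)$ yields $2\cot(\pi s/(2t))\ge0$, hence $\phi\ge\pi/2$; then $\cot\phi\le0$ forces $\cot\theta\ge0$, so $\theta\le\pi/2$. On the curve $\cot\phi+2\cot\theta=0$ with $\pi/2\le\phi<\pi$ and $0<\theta\le\pi/2$, the coordinate $\theta$ is a strictly decreasing function of $\phi$, and $\phi+\theta\to\pi$ at both ends of the curve ($\phi\to\pi/2$, $\theta\to\pi/2$; and $\phi\to\pi^-$, $\theta\to0^+$), while $\phi+\theta>\pi$ in the interior (e.g.\ just below $\phi=\pi$). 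Hence the maximum of $\phi+\theta$ along this curve is attained at an interior critical point, where $\tfrac{d}{d\phi}(\phi+\theta)=0$; differentiating the constraint there gives $\csc^2\phi=2\csc^2\theta$, i.e.\ $\sin^2\theta=2\sin^2\phi$. Combined with $\cot\phi=-2\cot\theta$ this forces $\cos^2\theta=1/3$ and $\cos^2\phi=2/3$, and the sign data pin down $\theta=\arccos(1/\sqrt3)$ and $\phi=\pi-\arccos(\sqrt{2/3})$. Since $\arccos(1/\sqrt3)=\pi/2-\arccos(\sqrt{2/3})$, this maximum is $\tfrac{3\pi}{2}-2\arccos(\sqrt{2/3})$, so $k_*(\ell_1+\ell_3)\le\tfrac{3\pi}{2}-2\arccos(\sqrt{2/3})$ for every metric.

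Finally, $\tfrac{3\pi}{2}-2\arccos(\sqrt{2/3})\le\tfrac{10}{9}\pi$ is equivalent to $\arccos(\sqrt{2/3})\ge\tfrac{7\pi}{36}$, hence, after squaring and using $\cos^2\alpha=\tfrac12(1+\cos2\alpha)$, to $\sin(\tfrac{\pi}{9})=\cos(\tfrac{7\pi}{18})\ge\tfrac13$. As $g(x)=3x-4x^3$ is increasing on $[0,\tfrac12]$ with $g(\sin\tfrac{\pi}{9})=\sin\tfrac{\pi}{3}=\tfrac{\sqrt3}{2}$ and $g(\tfrac13)=\tfrac{23}{27}$, this reduces to $\tfrac{23}{27}<\tfrac{\sqrt3}{2}$, i.e.\ $2116=46^2<3\cdot27^2=2187$. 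I expect the third paragraph to be the main obstacle: one must justify carefully that the supremum of $\phi+\theta$ on the curve is realized where the Lagrange relation $\sin^2\theta=2\sin^2\phi$ holds (so that the equal-lengths boundary point $\phi=\theta=\pi/2$ is not extremal), and that this relation together with $\phi\in(\pi/2,\pi)$, $\theta\in(0,\pi/2)$ determines $\phi+\theta$ uniquely; the rest is routine.
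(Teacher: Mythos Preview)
Your argument is correct and genuinely different from the paper's. The paper scales to $\ell(e_1)+\ell(e_3)=10/9$, reduces to $\ell(e_2)=\ell(e_3)$ by shortening, and then uses a single test function: $\sin(\pi t)$ on each edge (rescaled to match at the center). Integration by parts shows the Rayleigh quotient is at most $\pi^2$ provided the boundary term at the center has the right sign, which reduces via the identity $\sin x\cos y+2\sin y\cos x=\tfrac12(3\sin(x+y)-\sin(x-y))$ to the numerical fact $3\sin(10\pi/9)<-1$. You instead locate $\lambda_1$ exactly as $k_*^2$ via the secular equation $\sum\cot(k\ell_i)=0$, reduce to $\ell_2=\ell_3$ by implicit differentiation, and then maximize $k_*(\ell_1+\ell_3)=\phi+\theta$ along the curve $\cot\phi+2\cot\theta=0$; your Lagrange analysis pins down the unique critical point ($\cos^2\theta=1/3$, $\cos^2\phi=2/3$) and yields the sharp value $\tfrac{3\pi}{2}-2\arccos\sqrt{2/3}\approx 3.481$, which you then check is below $\tfrac{10\pi}{9}\approx 3.491$ by the triple-angle estimate. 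The paper's route is shorter and stays entirely within the variational framework; yours is longer but actually produces the optimal constant (and shows that $10/9$ is a slight overshoot). Your worry about the third paragraph is unfounded: the cotangent addition formula gives $\cot(\phi+\theta)=(2\cot^2\theta+1)/\cot\theta>0$ for $\theta\in(0,\pi/2)$, forcing $\phi+\theta\in(\pi,3\pi/2)$ everywhere in the interior, so the maximum is interior and the (unique) Lagrange point must be it.
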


In the second section of the article, we review background on quantum graphs.
This includes basic properties and lemmas, as well as examples of quantum graphs where the eigenvalues can be computed explicitly.
In the third section, we prove Lemma~\ref{lemdir} and use it to prove Theorem \ref{thmtree}, Theorem~\ref{thmneu}, and Theorem \ref{thminforest}.
In the fourth section, we prove Lemma \ref{lemclaw} and Theorem~\ref{thmk4}.

There are many related results concerning eigenvalues of quantum graphs.
The dependence of the eigenvalues on the lengths of edges was studied by Berkolaiko and Kuchment \cite{BK2} and Exner and Jex \cite{EJ}.
The behavior of the eigenvalues when an edge is deleted from a graph was considered by Kurasov, Malenova, and Naboko~\cite{KMN}.
An eigenvalue optimization result for graphs in $\R^n$ containining prescribed vertices was established by Buttazzo, Ruffini, and Velichkov \cite{BRV}.
Eigenvalues on regular trees were studied by Solomyak~\cite{S}.
Estimates for low eigenvalues were established by Demirel and Harrell \cite{DH} and Karreskog, Kurasov, and Trygg Kupersmidt~\cite{KKTK}.
The first eigenvalue of the $p$-Laplacian was considered by Del Pezzo and Rossi~\cite{DPR}.

\section{Background}

In this section, we briefly review background on quantum graphs.
For a thorough treatment, we refer to Berkolaiko and Kuchment~\cite{BK}.

The eigenvalues on finite metric graphs can be characterized variationally.
Let $(G, \ell)$ be a finite connected metric graph which contains at least one edge.
Identify each edge of $G$ with an interval in $\R$ of the same length.
Let $V$ be the vertex set of $G$, and let $D$ be a non-empty subset of $V$.
Let $H^1(G, \ell)$ denote the set of continuous functions $f:G \to \R$ which are in $H^1$ over each edge.
Let $H_0^1(G, \ell; D)$ be the subspace of $H^1(G, \ell)$ consisting of functions which vanish at the vertices in $D$.
Then for $j \ge 1$,
\[
\label{rq}
	\lambda_j(G, \ell; D) = \min_U \max_{f \in U} \frac{\int_G |f'|^2}{\int_G |f|^2 }
\]
The minimum is taken over all $j$-dimensional subspaces $U$ of $H_0^1(G, \ell; D)$.
Similarly, for $j \ge 0$,
\[
\label{krq}
	\lambda_j(G, \ell) = \min_W \max_{f \in W} \frac{\int_G |f'|^2}{\int_G |f|^2 }
\]
The minimum is taken over all $(j+1)$-dimensional subspaces $W$ of $H^1(G, \ell)$.

To describe the behavior of eigenvalues under scaling of the metric, let $c>0$ be a constant.
Then $c \cdot \ell$ is a metric on $G$.
For $j \ge 0$,
\[
\label{scaleneumann}
	\lambda_j(G, c \cdot \ell) = c^{-2} \lambda_j(G, \ell)
\]
Also, for $j \ge 1$,
\[
\label{scaledirichlet}
	\lambda_j(G, c \cdot \ell; D) = c^{-2} \lambda_j(G, \ell; D)
\]

Note that vertices of degree two with Neumann conditions do not play a significant role.
Let $A$ be a graph obtained from $G$ by subdiving an edge $e_0$ into two edges $e_1$ and $e_2$.
Let $\ell_A$ be a compatible metric on $A$, i.e. a metric such that $\ell_A(e_1)+\ell_A(e_2)=\ell(e_0)$ and $\ell_A(e) = \ell(e)$ for all edges $e \neq e_0$ in $G$.
Then for $j \ge 0$,
\[
\label{smoothneumann}
	\lambda_j(G, \ell) = \lambda_j(A, \ell_A)
\]
Also for $j \ge 1$,
\[
\label{smoothdirichlet}
	\lambda_j(G, \ell; D) = \lambda_j(A, \ell_A; D)
\]

\begin{Example}
Let $P$ be a path graph of order two, i.e. a connected graph with two vertices and one edge $e$.
If $\ell$ is a metric on $P$, then the eigenvalue problem is equivalent to an eigenvalue problem on the interval $[0,\ell(e)]$.
For $j \ge 0$,
\[
\label{pathneumann}
	\lambda_j(P, \ell) = \frac{j^2 \pi^2}{\ell(e)^2}
\]
If $v_1$ and $v_2$ are the vertices of $P$, then for $j \ge 1$,
\[
\label{pathdirichlet}
	\lambda_j(P, \ell; v_1,v_2) = \frac{j^2 \pi^2}{\ell(e)^2}
\]
Also, for $j \ge 1$,
\[
\label{pathmixed}
	\lambda_j(P, \ell; v_1 ) = \frac{(2j-1)^2 \pi^2}{4 \ell(e)^2}
\]
\end{Example}

\begin{Example}
\label{sharpstar}
A star graph $S$ is a tree containing a vertex $v$ such that every edge in $S$ is incident to $v$.
Let $p \ge 3$ and let $j \ge 1$.
Let $S$ be a star graph with $p$ edges.
Let $\ell$ be a metric on $S$ such that $p-1$ edges have length one, and the other edge has length $2j-1$.
Then $L(S, \ell)=2j+p-2$, and
\[
\label{sharpstareq}
	\lambda_j(S, \ell) = \frac{\pi^2}{4}
\]
To verify \eqref{sharpstareq}, note that the eigenvalues $\lambda_j(S, \ell; v)$ can be computed easily.
This is because the eigenvalue problem with Dirichlet conditions imposed at $v$ is equivalent to an eigenvalue problem on $n$ disjoint intervals.
In particular
\[
	\lambda_j(S, \ell; v) = \lambda_{j+1}(S, \ell; v) = \frac{\pi^2}{4}
\]
Moreover, eigenvalue interlacing \cite[Theorem 3.1.8]{BK} states that
\[
\label{evinterlace}
	\lambda_j(S, \ell; v) \le \lambda_j(S, \ell) \le \lambda_{j+1}(S, \ell;v)
\]
This establishes \eqref{sharpstareq}.
\end{Example}

\begin{Example}
\label{dipole}
Fix $m \ge 3$, and let $G$ be a dipole graph with $m$ edges.
That is, let $G$ be a graph with two vertices $v$ and $w$, and $m$ edges, each of which is incident to both $v$ and $w$.
Let $\ell$ be the metric on $G$ such that $\ell(e)=1$ for every edge $e$ in $G$.
Then $L(G,\ell)=m$, and
\[
\label{dipole1}
	\lambda_1(G, \ell) = \pi^2
\]
Note that it is easy to show that $\pi^2$ is an eigenvalue.
To verify \eqref{dipole1}, it suffices to show that there is no eigenvalue $\lambda$ satisfying $0 < \lambda< \pi^2$.
The metric graph $(G, \ell)$ admits an isometry which maps $v$ to $w$ and maps each edge to itself.
It follows that every eigenvalue admits an eigenfunction which is either even or odd with respect to this isometry.
Using this observation, it is easy to show that there is no eigenvalue $\lambda$ satisfying $0 < \lambda < \pi^2$.
\end{Example}

The following lemma shows that contracting an edge yields a graph with larger eigenvalues.

\begin{Lemma}
\label{contract}
Let $(G, \ell)$ be a finite connected metric graph which contains at least two edges.
Let $e$ be an edge in $G$, and let $A$ be the graph obtained from $G$ by contracting $e$.
Let $\ell_A$ be the induced metric on $A$.
Then for $j \ge 1$,
\[
\label{contract1}
	\lambda_j(G, \ell) \le \lambda_j(A, \ell_A)
\]
Let $D$ be a non-empty set of vertices in $G$, and let $D_A$ be the induced set in $A$.
Then for $j \ge 1$,
\[
\label{contract2}
	\lambda_j(G, \ell; D) \le \lambda_j(A, \ell_A; D_A)
\]
\end{Lemma}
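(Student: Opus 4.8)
The plan is to use the variational characterizations \eqref{rq} and \eqref{krq}, together with the observation that contracting an edge corresponds to shrinking its length to zero. First I would set up a family of metrics $\ell_t$ on $G$ for $t \in (0,1]$, defined by $\ell_t(e) = t\,\ell(e)$ on the edge $e$ being contracted and $\ell_t(e') = \ell(e')$ on every other edge. For each $t$ the metric graph $(G,\ell_t)$ is a genuine metric graph, and as $t \to 0^+$ the edge $e$ degenerates. The key claim is that
\[
	\lim_{t \to 0^+} \lambda_j(G, \ell_t) = \lambda_j(A, \ell_A), \qquad \lim_{t \to 0^+} \lambda_j(G, \ell_t; D) = \lambda_j(A, \ell_A; D_A),
\]
and that the map $t \mapsto \lambda_j(G, \ell_t)$ is monotone, so in fact $\lambda_j(G, \ell) = \lambda_j(G, \ell_1) \le \lambda_j(A, \ell_A)$. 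In practice it is cleaner to prove the inequality directly rather than through a limit: I would exhibit, for any admissible $j$-dimensional (or $(j+1)$-dimensional) subspace of test functions on $A$, a corresponding subspace on $(G,\ell)$ with Rayleigh quotients no larger.

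The main construction is a pull-back map $\Phi$ from functions on $A$ to functions on $G$. Given $f \in H^1(A, \ell_A)$ (respectively $f \in H^1_0(A, \ell_A; D_A)$), define $\Phi f$ on $G$ by setting $\Phi f = f$ on every edge other than $e$, and on the edge $e$—whose two endpoints $x,y$ in $G$ are identified to a single vertex $z$ of $A$—let $\Phi f$ be the constant function equal to $f(z)$. This $\Phi f$ is continuous on $G$: away from $e$ it agrees with $f$, at $x$ and $y$ it takes the value $f(z)$, which matches the value of $f$ on the adjacent edges since $f$ is continuous at $z$. If $f$ vanishes on $D_A$, then $\Phi f$ vanishes on $D$ (the vertices of $D$ either lie outside $e$, where $\Phi f = f$, or are among $x,y$, where $\Phi f = f(z) = 0$ since $z \in D_A$). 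Moreover $\Phi$ is linear and injective, so it maps a $k$-dimensional subspace to a $k$-dimensional subspace. For the Rayleigh quotient, $(\Phi f)' = 0$ on $e$ and $(\Phi f)' = f'$ elsewhere, so $\int_G |(\Phi f)'|^2 = \int_A |f'|^2$, while $\int_G |\Phi f|^2 = \int_A |f|^2 + \ell(e)\,|f(z)|^2 \ge \int_A |f|^2$. Hence the Rayleigh quotient of $\Phi f$ on $G$ is at most that of $f$ on $A$.

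Combining these, given an optimal $j$-dimensional subspace $U \subset H^1_0(A, \ell_A; D_A)$ realizing $\lambda_j(A, \ell_A; D_A)$ in \eqref{rq}, the subspace $\Phi(U) \subset H^1_0(G, \ell; D)$ is $j$-dimensional and every element has Rayleigh quotient at most $\lambda_j(A, \ell_A; D_A)$, so the min-max for $(G, \ell; D)$ gives $\lambda_j(G, \ell; D) \le \max_{g \in \Phi(U)} \mathcal{R}(g) \le \lambda_j(A, \ell_A; D_A)$, which is \eqref{contract2}. The identical argument with $(j+1)$-dimensional subspaces of $H^1$ and the characterization \eqref{krq} yields \eqref{contract1}. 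I expect the main obstacle to be purely bookkeeping: verifying continuity of $\Phi f$ at the endpoints of $e$ when $e$ is a loop or when $x,y$ already coincide (contracting a loop, or multiple edges between the same pair), and confirming that the induced set $D_A$ and induced metric $\ell_A$ are the right objects in each degenerate case—these are routine but need to be stated carefully so that $\Phi$ is well defined. The hypothesis that $G$ has at least two edges guarantees $A$ still contains an edge, so all the quantities in the statement make sense.
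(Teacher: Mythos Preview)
Your proposal is correct and follows essentially the same route as the paper: the paper also extends each $f\in H^1(A,\ell_A)$ to a function on $G$ by making it constant on $e$, observes that the Rayleigh quotient does not increase, and then invokes the min--max characterizations \eqref{rq} and \eqref{krq}. Your write-up is simply more explicit about the linearity and injectivity of the extension map and about the degenerate cases (loops, multiple edges), which the paper's proof leaves implicit.
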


\begin{proof}
Let $f$ be a function in $H^1(A, \ell_A)$.
There is a unique function $g$ in $H^1(G, \ell)$ which is constant on $e$ and agrees with $f$ over $G \setminus e$.
Moreover,
\[
	\frac{\int_G |g'|^2}{\int_G |g|^2 } \le \frac{\int_A |f'|^2}{\int_A |f|^2 }
\]
Furthermore, if $f$ is in $H_0^1(A, \ell_A; D_A)$, then $g$ is in $H_0^1(G, \ell; D)$.
Because of \eqref{rq} and \eqref{krq}, this implies \eqref{contract1} and \eqref{contract2}.
\end{proof}

A consequence of Lemma \ref{contract} is that shortening an edge yields a metric with larger eigenvalues.

\begin{Lemma}
\label{shorten}
Let $(G, \ell_1)$ be a finite connected metric graph which contains at least one edge.
Let $e_0$ be an edge in $G$, and let $\ell_2$ be a metric on $G$ such that $\ell_2(e_0) < \ell_1(e_0)$ and $\ell_2(e)= \ell_1(e)$ for all edges $e \neq e_0$.
Then for $j \ge 1$,
\[
\label{shorten1}
	\lambda_j(G, \ell_1) \le \lambda_j(G, \ell_2)
\]
Let $D$ be a non-empty set of vertices in $G$.
Then for $j \ge 1$,
\[
\label{shorten2}
	\lambda_j(G, \ell_1; D) \le \lambda_j(G, \ell_2; D)
\]
\end{Lemma}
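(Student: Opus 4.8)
The plan is to deduce Lemma~\ref{shorten} from Lemma~\ref{contract} by a ``subdivide-then-contract'' argument: the metric graph $(G, \ell_2)$ can be obtained from $(G, \ell_1)$ by first inserting a dummy degree-two vertex on $e_0$ and then collapsing the part of $e_0$ we wish to delete. Since inserting a Neumann vertex of degree two does not change the spectrum and collapsing an edge only raises eigenvalues, the inequalities follow.

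Concretely, starting from $(G, \ell_1)$, I would subdivide $e_0$ into two edges $e'$ and $e''$ and equip the resulting graph $A$ with the compatible metric $\ell_A$ given by $\ell_A(e') = \ell_2(e_0)$ and $\ell_A(e'') = \ell_1(e_0) - \ell_2(e_0)$; this is a genuine metric because $0 < \ell_2(e_0) < \ell_1(e_0)$. By \eqref{smoothneumann} and \eqref{smoothdirichlet} (the latter applied with the set $D$, which does not contain the new degree-two vertex), $\lambda_j(G, \ell_1) = \lambda_j(A, \ell_A)$ and $\lambda_j(G, \ell_1; D) = \lambda_j(A, \ell_A; D)$ for all $j \ge 1$. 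The graph $A$ has $m(G) + 1 \ge 2$ edges, so Lemma~\ref{contract} applies to the contraction of $e''$ in $A$: writing $A'$ for the contracted graph, $\ell_{A'}$ for the induced metric, and $D'$ for the induced vertex set, we get $\lambda_j(A, \ell_A) \le \lambda_j(A', \ell_{A'})$ and $\lambda_j(A, \ell_A; D) \le \lambda_j(A', \ell_{A'}; D')$.

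The remaining point is to identify $(A', \ell_{A'})$ with $(G, \ell_2)$ compatibly with the Dirichlet set, and this is bookkeeping rather than a real obstacle. Contracting $e''$ merges the dummy vertex with one endpoint of $e_0$, so $e'$ becomes an edge joining the two original endpoints of $e_0$ (or a loop at the single endpoint, if $e_0$ was a loop) of length $\ell_2(e_0)$, while every other edge retains its $\ell_1$-length, which equals its $\ell_2$-length; hence $(A', \ell_{A'}) \cong (G, \ell_2)$. Because the dummy vertex lies outside $D$, its image in $A'$ belongs to $D'$ exactly when the endpoint it is merged with belongs to $D$, so $D'$ corresponds to $D$ under this isomorphism. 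Both $A$ and $A'$ are connected since $G$ is, and the argument is uniform over whether $e_0$ is an ordinary edge, a loop, or one of several parallel edges. Chaining the three relations gives \eqref{shorten1} and \eqref{shorten2}.
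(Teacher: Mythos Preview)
Your proof is correct and follows exactly the paper's approach: subdivide $e_0$ into two edges, use \eqref{smoothneumann} and \eqref{smoothdirichlet} to identify the spectra, and then apply Lemma~\ref{contract} to contract the shorter piece. The paper states this in one sentence, and your write-up simply makes the identifications explicit.
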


\begin{proof}
Note that $(G, \ell_2)$ can be obtained from $(G, \ell_1)$ by subdividing $e_0$ into two edges and then contracting one of the new edges.
Therefore \eqref{shorten1} and \eqref{shorten2} follow from \eqref{smoothneumann}, \eqref{smoothdirichlet}, and Lemma \ref{contract}.
\end{proof}

Lemma \ref{contract} also shows that deleting a pendant edge and the incident pendant vertex from a graph increases the Neumann eigenvalues.

\begin{Lemma}
\label{neumanndelete}
Let $(G, \ell)$ be a finite connected metric graph which contains at least two edges.
Assume $G$ contains a pendant edge $e$.
Let $A$ be the finite graph obtained from $G$ by deleting $e$ and the incident pendant vertex.
Let $\ell_A$ be the induced metric.
For any $j$,
\[
\label{neumanndeleteeq}
	\lambda_j(G, \ell) \le \lambda_j(A, \ell_A)
\]
\end{Lemma}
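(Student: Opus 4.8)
The plan is to observe that the graph $A$ in the statement is nothing other than the graph obtained from $G$ by contracting the pendant edge $e$, and then to apply Lemma~\ref{contract}. The key combinatorial remark is that the pendant vertex incident to $e$ has degree one, so $e$ is the only edge meeting it. Contracting $e$ therefore identifies this vertex with the other endpoint of $e$ and deletes $e$, while leaving every other edge of $G$ and its length untouched; this is precisely the operation of removing $e$ together with its incident pendant vertex. In particular the induced metric on the contracted graph coincides with $\ell_A$.

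With this identification made, the inequality for $j \ge 1$ is immediate from \eqref{contract1}: since $G$ contains at least two edges, Lemma~\ref{contract} applies and gives $\lambda_j(G, \ell) \le \lambda_j(A, \ell_A)$. It then remains only to dispose of the case $j = 0$, which is not covered by \eqref{contract1}. Here I would note that $A$ is connected (contracting an edge preserves connectedness) and has $m(G) - 1 \ge 1$ edges, so both $(G, \ell)$ and $(A, \ell_A)$ are finite connected metric graphs containing at least one edge; hence $\lambda_0(G, \ell) = 0 = \lambda_0(A, \ell_A)$, and the asserted inequality holds with equality. Together these two cases establish \eqref{neumanndeleteeq} for every $j$.

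I do not anticipate a genuine obstacle: the whole content is the observation that deleting a pendant edge (with its leaf) is a special case of edge contraction, after which the result follows from Lemma~\ref{contract}. The only point demanding a moment's attention is the bookkeeping for $j = 0$, since the Neumann part of Lemma~\ref{contract} is stated only for $j \ge 1$; but this case is trivial because the zeroth Neumann eigenvalue of any finite connected metric graph is zero.
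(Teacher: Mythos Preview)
Your proposal is correct and follows exactly the paper's approach: the paper's proof is the single line ``This follows immediately from Lemma~\ref{contract},'' and your argument spells out precisely why (deletion of a pendant edge with its leaf coincides with contraction of that edge). Your extra handling of the case $j=0$ is a small bonus, since Lemma~\ref{contract} is stated only for $j\ge 1$ while the present lemma says ``for any $j$''; the paper glosses over this triviality.
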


\begin{proof}
This follows immediately from Lemma \ref{contract}.
\end{proof}

The following lemma describes the effect of deleting a pendant edge when Dirichlet conditions are imposed at the pendant vertices.

\begin{Lemma}
\label{dirichletdelete}
Let $(G, \ell)$ be a finite connected metric graph which contains at least two edges.
Let $D$ be a set of vertices in $G$ which contains every pendant vertex.
Let $v$ be a pendant vertex and let $e$ be the incident pendant edge.
View $e$ as a path subgraph and let $\ell_e$ be the induced metric.
Let $A$ be the finite graph obtained from $G$ by deleting $e$ and $v$.
Let $\ell_A$ be the induced metric, and let $D_A=D \setminus \{ v \}$.
Assume $D_A$ is not empty.
For any $j \ge 1$,
\[
\label{dirichletdeleteeq}
	\lambda_j(G, \ell; D) \le \max \bigg( \lambda_j(A, \ell_A; D_A), \lambda_1(e, \ell_e; v) \bigg)
\]
\end{Lemma}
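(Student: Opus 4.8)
The plan is to use the variational characterization \eqref{rq} together with a "gluing" construction that transplants a test function on $A$ and a test function on the pendant edge $e$ into a single test function on $G$. Fix $j \ge 1$ and let $U_A \subset H^1_0(A, \ell_A; D_A)$ be a $j$-dimensional subspace realizing $\lambda_j(A, \ell_A; D_A)$ in the min-max. Write $w$ for the vertex of $e$ other than $v$; in $A$ this vertex $w$ survives (it may have lower degree, but it is still present since $G$ has at least two edges and $e$ is pendant). Each function $h \in U_A$ has a value $h(w)$, and the map $h \mapsto h(w)$ is a linear functional on $U_A$. I would split into two cases depending on whether this functional is zero.

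If $h(w) = 0$ for every $h \in U_A$, then extending each $h$ by zero across $e$ gives a $j$-dimensional subspace of $H^1_0(G, \ell; D)$ (the extension is continuous since $h(w) = 0$, vanishes at $v$, and adds nothing to either integral), and \eqref{rq} immediately yields $\lambda_j(G, \ell; D) \le \lambda_j(A, \ell_A; D_A)$, which is even stronger than \eqref{dirichletdeleteeq}. Otherwise the functional $h \mapsto h(w)$ is nontrivial; its kernel is a $(j-1)$-dimensional subspace $U_A^0$, and I extend those functions by zero as before. To get a $j$-dimensional test space on $G$ I need one more function whose restriction to $e$ is a Dirichlet eigenfunction. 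Pick $h_0 \in U_A$ with $h_0(w) \neq 0$, let $\psi$ be the first Dirichlet eigenfunction on $(e, \ell_e; v)$ (so $-\psi'' = \lambda_1(e, \ell_e; v)\psi$ on $e$, $\psi(v)=0$, $\psi'(w)=0$), normalize $\psi(w) = h_0(w)$, and let $\tilde h_0$ be $h_0$ on $A$ and $\psi$ on $e$; this is continuous at $w$ and vanishes at every vertex of $D$. Set $U = U_A^0 \oplus \R \tilde h_0$, a $j$-dimensional subspace of $H^1_0(G, \ell; D)$.

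The remaining step is the Rayleigh quotient estimate on $U$. For a general element $f = g + t \tilde h_0$ with $g \in U_A^0$, the support of $g$ on $e$ is empty, so $\int_G |f'|^2 = \int_A |(g + t h_0)'|^2 + t^2 \int_e |\psi'|^2$ and $\int_G |f|^2 = \int_A |g + t h_0|^2 + t^2 \int_e |\psi|^2$. On $e$ the pair $(\int_e|\psi'|^2, \int_e|\psi|^2)$ has ratio exactly $\lambda_1(e, \ell_e; v)$, while on $A$ the function $g + t h_0$ lies in $U_A$ and hence has Rayleigh quotient at most $\lambda_j(A, \ell_A; D_A)$. Both the numerator and denominator of the Rayleigh quotient of $f$ are thus sums of two terms, each of which is bounded by the corresponding denominator term times $\max\big(\lambda_j(A,\ell_A;D_A), \lambda_1(e,\ell_e;v)\big)$; the elementary inequality $\tfrac{a_1+a_2}{b_1+b_2} \le \max(\tfrac{a_1}{b_1}, \tfrac{a_2}{b_2})$ for positive $b_i$ then gives the claimed bound, and \eqref{rq} finishes the proof. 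The one point demanding a little care — the main obstacle — is making sure $g + t h_0$ genuinely lands in $U_A$ and therefore inherits the min-max bound $\lambda_j(A, \ell_A; D_A)$ for the part living on $A$; this is exactly why $\tilde h_0$ is built from $h_0 \in U_A$ rather than from an arbitrary lift, and why the kernel $U_A^0$ is taken inside $U_A$. Degenerate subcases (for instance $\int_A|g+th_0|^2 = 0$, forcing $g + t h_0 = 0$ on $A$ and $t \neq 0$) reduce directly to the pure-$e$ estimate $\lambda_1(e,\ell_e;v)$ and cause no trouble.
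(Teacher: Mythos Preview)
Your proof is correct and is essentially the paper's argument: extend test functions from $A$ to $G$ via the first Dirichlet--Neumann eigenfunction on $e$, then bound the Rayleigh quotient by the elementary inequality $\tfrac{a_1+a_2}{b_1+b_2}\le\max(a_1/b_1,a_2/b_2)$ and apply \eqref{rq}. The paper streamlines the presentation by writing a single linear extension map $f\mapsto g$ with $g|_e(x)=f(w)\sin\!\big(\tfrac{x\pi}{2\ell(e)}\big)$, which absorbs your case split (when $f(w)=0$ the extension is automatically zero on $e$), but the resulting $j$-dimensional test space and the estimate are identical to yours.
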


\begin{proof}
Let $w$ be the vertex incident to $e$ which is not $v$.
Identify $e$ with the interval $[0,\ell(e)]$ so that $v$ is identified with zero and $w$ is identified with $\ell(e)$.
For a function $f$ in $H_0^1(A, \ell_A; D_A)$, define $g$ in $H_0^1(G, \ell; D)$ by
\[
	g(x) =
	\begin{cases}
		f(x) & x \in A \\
		f(w) \sin \Big( \frac{x \pi}{2\ell(e)} \Big) & x \in e \sim [0,\ell(e)] \\
	\end{cases}
\]
Then
\[
	\frac{\int_G |g'|^2}{\int_G |g|^2 } \le \max \bigg( \frac{\int_A |f'|^2}{\int_A |f|^2 }, \lambda_1(e, \ell_e; v) \bigg)
\]
Because of \eqref{rq}, this implies \eqref{dirichletdeleteeq}.
\end{proof}

We conclude this section by establishing a basic fact about trees which will be used in the proof of Lemma \ref{lemdir}.

\begin{Lemma}
\label{degreetwo}
Let $G$ be a finite tree which contains at least two edges.
Assume that there is no pair of incident leaf edges in $G$. 
Then there is a vertex of degree two in $G$ which is adjacent to a leaf.
\end{Lemma}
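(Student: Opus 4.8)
The statement is a purely combinatorial fact about finite trees, so I would forget the metric and differential operator entirely and argue by induction on the number of edges, or equivalently by a direct extremal/minimal-path argument. The hypothesis is that no two leaf edges share a vertex; I want to produce a degree-two vertex adjacent to a leaf.

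First I would set up the extremal object: among all leaves of $G$, pick a leaf $v_0$ and consider a longest path (in the graph-theoretic sense, counting edges) in $G$ starting at $v_0$. Since $G$ is a finite tree, such a path $v_0, v_1, v_2, \ldots, v_k$ exists and is simple, and its other endpoint $v_k$ must also be a leaf (otherwise the path could be extended, contradicting maximality — here one uses that in a tree there are no cycles, so extending never revisits a vertex). Now the key vertex to examine is $v_1$, the unique neighbor of the leaf $v_0$. I claim $v_1$ has degree two. Indeed, $v_1$ is adjacent to $v_0$ (a leaf) and to $v_2$; if $v_1$ had a third neighbor $w \notin \{v_0, v_2\}$, then since $G$ is a tree the subtree hanging off $v_1$ through $w$ contains a leaf $w'$, and the path from $v_0$ through $v_1$ to that leaf has length at least... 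I need to be a little careful that this really contradicts maximality of the chosen path — the cleanest route is to instead choose the path to maximize length and note that any branch at $v_1$ away from $v_0$ of length $\ge 2$ would give a longer path from $v_0$; a branch of length exactly $1$ at $v_1$ would be a leaf edge incident to $v_1$, and since $v_0 v_1$ is also a leaf edge this violates the no-incident-leaf-edges hypothesis. So $v_1$ has exactly the two neighbors $v_0$ and $v_2$, i.e. $\deg(v_1) = 2$, and $v_1$ is adjacent to the leaf $v_0$, which is exactly what we want.

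The one technical point requiring care is the edge case $k = 1$, i.e. the longest path from $v_0$ has length one. Then $G$ would consist of the single edge $v_0 v_1$ plus possibly other edges at $v_1$; but $G$ has at least two edges, so $v_1$ has another incident edge, and by maximality of the path that edge must be a leaf edge at $v_1$ — again contradicting the hypothesis, since $v_0 v_1$ is a leaf edge too. Hence $k \ge 2$ and the argument above applies; alternatively one observes $k \ge 2$ is automatic once $G$ has $\ge 2$ edges and no two leaf edges are incident, since the path from one leaf to another must pass through at least one internal vertex.

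The main obstacle is not difficulty but bookkeeping: making the "longest path" argument airtight requires being precise about why maximality forbids a branch at $v_1$, and separately handling how the no-incident-leaf-edges hypothesis rules out the short-branch case that maximality alone does not. A slicker alternative, which I would mention as a variant, is to induct: if $G$ has a vertex of degree two adjacent to a leaf we are done; otherwise every leaf's neighbor has degree $\ge 3$ (degree $2$ adjacent to a leaf being excluded, and degree $1$ impossible since that neighbor would make an incident pair of leaf edges with... actually two leaves joined by an edge, again excluded), and then a counting argument on $\sum_v (\deg v - 2) = -2$ over a tree forces the existence of enough leaves to create an incident pair — but the longest-path argument is cleaner, so that is the one I would write up.
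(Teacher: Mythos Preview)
Your longest-path strategy is the right idea, but there is a slip in the maximality step. You examine $v_1$, the neighbour of the starting leaf $v_0$, and assert that a branch at $v_1$ toward some $w \notin \{v_0, v_2\}$ of depth at least $2$ would produce a longer path \emph{from $v_0$}. It would not: such a branch of depth $d$ yields a path from $v_0$ of length only $1 + d$, and nothing forces $1+d > k$. The correct comparison, once you take a \emph{globally} longest path, is with the path from the leaf $w'$ at the end of that branch through $v_1$ to $v_k$; that path has length $d + (k-1)$, and maximality now forces $d \le 1$, so $w$ is itself a leaf and you obtain the forbidden incident pair of leaf edges at $v_1$. Equivalently, keep the ``longest path starting at $v_0$'' formulation but examine $v_{k-1}$ rather than $v_1$: every branch at $v_{k-1}$ away from $v_{k-2}$ then has depth at most $1$ directly from maximality at the $v_0$-end, and you finish the same way. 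With either repair the argument is complete; the $k=1$ discussion is then unnecessary, since a globally longest path in a tree with at least two edges automatically has $k \ge 2$.

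The paper takes a different and shorter route: it deletes every leaf and leaf edge of $G$ to obtain a subtree $A$ (which still has at least one edge, since otherwise every edge of $G$ would be a leaf edge and any internal vertex would witness an incident pair), and then observes that any leaf $w$ of $A$ has degree exactly two in $G$: at least two because $w$ survived the pruning and hence is not a leaf of $G$, and at most two because the hypothesis allows $w$ at most one leaf neighbour beyond its single neighbour in $A$. This pruning argument sidesteps all the path-maximality bookkeeping; your approach, once fixed, has the advantage of being the standard extremal technique and would generalize more readily if one wanted finer structural information near the ends of long paths.
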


\begin{proof}
Let $A$ be the tree obtained from $G$ by deleting each of the leaves and leaf edges.
Note that $A$ contains at least one edge.
Let $w$ be a leaf of $A$.
There is no pair of incident leaf edges in $G$, so the degree of $w$ in $G$ is at most two.
The leaves of $G$ are not in $A$, so the degree of $w$ in $G$ is exactly two.
In particular, $w$ is adjacent to a leaf of $G$.
\end{proof}

\section{Eigenvalue estimates}

In this section, we prove Lemma \ref{lemdir} and use it to prove Theorem \ref{thmtree}, Theorem \ref{thmneu}, and Theorem \ref{thminforest}.
We first prove Lemma \ref{lemdir} for trees.

\begin{Lemma}
\label{lemdtree}
Let $(G, \ell)$ be a finite metric tree containing at least one edge.
Let $D$ be a set of vertices in $G$ which contains every leaf.
For $j \ge 1$,
\[
\label{lemdtreeeq}
	\lambda_j(G, \ell; D) \le \frac{(j+|D|-2)^2 \pi^2}{L(G, \ell)^2}
\]
\end{Lemma}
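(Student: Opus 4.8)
The plan is to induct on the number of edges in $G$, using the earlier reduction lemmas to peel off a pendant edge at each step. The base case is a single edge: then $D$ must contain both endpoints (both are leaves), so $|D|=2$ and $\lambda_j(G,\ell;D) = j^2\pi^2/\ell(e)^2$ by \eqref{pathdirichlet}, which is exactly the claimed bound with $|D|-2=0$. For the inductive step I would first dispose of the easy case where $G$ has a pair of incident leaf edges, and otherwise use Lemma \ref{degreetwo} to locate a degree-two vertex adjacent to a leaf.

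For the main reduction, suppose $G$ has at least two edges and no pair of incident leaf edges. By Lemma \ref{degreetwo} there is a degree-two vertex $u$ adjacent to a leaf $v$; let $e$ be the pendant edge at $v$. Using \eqref{smoothdirichlet} I may assume $u \notin D$ (a Neumann vertex of degree two can be smoothed away), so the edge $e$ together with the next edge forms, after smoothing, a single pendant edge of some length $t$ at the leaf $v$. Now apply Lemma \ref{dirichletdelete}: deleting this pendant edge $e$ and the leaf $v$ produces a tree $A$ with strictly fewer edges, with $D_A = D \setminus\{v\}$ (note $|D_A| = |D|-1$ and $D_A$ is nonempty since $A$ still has leaves, all of which lie in $D_A$), and
\[
	\lambda_j(G,\ell;D) \le \max\Big( \lambda_j(A,\ell_A;D_A),\ \lambda_1(e,\ell_e;v) \Big).
\]
Here $\lambda_1(e,\ell_e;v) = \pi^2/(4t^2) = \pi^2/(2t)^2$ by \eqref{pathmixed} with $j=1$, and $L(A,\ell_A) = L(G,\ell) - t$.

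It remains to balance the two terms. By induction, $\lambda_j(A,\ell_A;D_A) \le (j+|D_A|-2)^2\pi^2/L(A,\ell_A)^2 = (j+|D|-3)^2\pi^2/(L(G,\ell)-t)^2$. So I need
\[
	\max\!\left( \frac{(j+|D|-3)^2\pi^2}{(L-t)^2},\ \frac{\pi^2}{4t^2} \right) \le \frac{(j+|D|-2)^2\pi^2}{L^2},
\]
writing $L = L(G,\ell)$. The first inequality holds iff $(j+|D|-3)L \le (j+|D|-2)(L-t)$, i.e. $(j+|D|-2)t \le L$; the second holds iff $L \le 2(j+|D|-2)t$. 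Both can fail for a fixed $t$, but I am free to choose where to split: the parameter $t$ ranges over $(0, L - L(A,\ell_A))$ — actually $t$ is determined by $\ell$, so the honest move is to observe that \emph{at least one} of the two bounds suffices depending on the size of $t$ relative to $L$, and when neither does — i.e. when $L/(2(j+|D|-2)) < t < L/(j+|D|-2)$ is violated in the wrong direction — I instead shorten $e$ first using Lemma \ref{shorten} (which only increases the eigenvalue, so an upper bound for the shortened graph suffices) to push $t$ into the favorable range, then apply the argument. Concretely: if $t \ge L/(j+|D|-2)$ the first term already dominates and we are done without shortening; if $t < L/(j+|D|-2)$ we may replace $t$ by exactly $L/(2(j+|D|-2))$ (legitimate when this is $\le t$, else shorten to it — wait, shortening decreases $t$) — the clean statement is that we shorten $e$ down to length $t' = L'/(2(j+|D|-2))$ where $L'$ is the new total length, solving the fixed point $t' = (L(A,\ell_A)+t')/(2(j+|D|-2))$, which has a positive solution $\le$ the original $t$ precisely in the remaining case. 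I expect this balancing — verifying that the shortening target is always reachable (i.e. $\le$ the current edge length) in exactly the complementary case — to be the main obstacle, though it reduces to elementary algebra once set up correctly.
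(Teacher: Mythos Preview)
Your balancing argument has a genuine gap. Write $N=j+|D|-2$ and normalize $L=N$. After deleting the pendant edge of length $t$ you obtain
\[
\lambda_j(G,\ell;D)\le\max\Bigl(\tfrac{(N-1)^2\pi^2}{(N-t)^2},\ \tfrac{\pi^2}{4t^2}\Bigr),
\]
and you want this $\le\pi^2$. The first term is $\le\pi^2$ iff $t\le 1$, the second iff $t\ge 1/2$. Your claim that ``if $t\ge L/(j+|D|-2)$ the first term already dominates and we are done'' is incorrect: in that regime the first term \emph{exceeds} the target. Shortening $e$ cannot repair this, because $L(A)=L-t$ is determined by the original $t$ (you only shorten $e$, not $A$), so the inductive bound on $A$ is unchanged. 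Conversely, if $t<1/2$ then the second term is too large and shortening $e$ makes it worse, not better. In short, neither of your two mechanisms covers the regimes $t>1$ or $t<1/2$, and Lemma~\ref{shorten} moves $t$ in only one direction.

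The paper fixes this with two additional ideas you are missing. For a long leaf edge ($t>1$), subdivide it and take a segment of length exactly $1$ with Dirichlet conditions at \emph{both} ends; this contributes one test function with Rayleigh quotient $\pi^2$, and one then inducts on the remaining tree with $j$ replaced by $j-1$ and $|D|$ unchanged (so $N$ drops by $1$, matching the drop in length). For a short leaf edge ($t\le 1/2$), one arranges that the non-leaf endpoint $w$ of $e$ lies in $D$ --- either because two short incident leaf edges can be traded for a single Dirichlet vertex at their common endpoint, or (via Lemma~\ref{degreetwo} after smoothing away all Neumann degree-two vertices, so that any remaining degree-two vertex is in $D$, which is the \emph{opposite} of your ``may assume $u\notin D$'') --- and then extends test functions on $A$ by zero across $e$, giving $\lambda_j(G,\ell;D)\le\lambda_j(A,\ell_A;D_A)$ with no second term at all. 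Only the intermediate range $1/2\le t\le 1$ uses Lemma~\ref{dirichletdelete} as you do.
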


\begin{proof}
Note that if $|D|=2$, then \eqref{lemdtreeeq} follows from \eqref{smoothdirichlet} and \eqref{pathdirichlet}.
We complete the proof by induction on $j+|D|$.
Fix a finite metric tree $(G, \ell)$ which contains at least one edge.
Also fix a set of vertices $D$ in $G$ which contains every leaf and fix a positive integer $j$.
We may assume that $|D| \ge 3$.
Let $(A, \ell_A)$ be a finite metric tree, let $D_A$ be a set of vertices in $A$ containing every leaf, and let $i$ be a positive integer.
By induction, we may assume that if $|D_A| + i < |D| + j$, then
\[
\label{ldin}
	\lambda_i(A, \ell_A; D_A) \le \frac{(i+|D_A|-2)^2 \pi^2}{L(A, \ell_A)^2}
\]

By \eqref{smoothdirichlet}, we may assume that there are no vertices of degree two in $G$ which are not in $D$.
By \eqref{scaledirichlet}, we may assume that $L(G, \ell)=j+|D|-2$.
To prove \eqref{lemdtreeeq}, we need to show that
\[
\label{lemdtree1}
	\lambda_j(G, \ell; D) \le \pi^2
\]
We break the argument into four cases.
In the first case, we assume that there is a leaf edge $e$ such that $\ell(e) > 1$.
In the second case, we assume that there is a leaf edge $e$ such that $1/2 \le \ell(e) \le 1$. 
In the third case, we assume that $\ell(e) \le 1/2$ for every leaf edge $e$ and there is a pair of incident leaf edges.
In the fourth case, we assume that $\ell(e) \le 1/2$ for every leaf edge $e$ and there is no pair of incident leaf edges.
In each case we use the following notation.
Let $p=p(G)$ be the number of leaves in $G$ and let $v_1,v_2, \ldots, v_p$ be the leaves.
For each $i=1,2,\ldots,p$, let $e_i$ be the leaf edge of $G$ which is incident to $v_i$.

\emph{Case 1:}
In this case, we assume that there is a leaf edge $e$ in $G$ such that $\ell(e) > 1$.
Without loss of generality, we may assume that $\ell(e_1) > 1$.
Subdivide $e_1$ into two edges.
Define a compatible metric so that the new edge incident to $v_1$ has length one and the other new edge has length $\ell(e_1)-1$.
Let $w$ be the new vertex adjacent to $v_1$.
Let $B$ be the new edge incident to $v_1$ and $w$.
View $B$ as a path subgraph and let $\ell_B$ be the induced metric.
Then
\[
\label{dnj11}
	\lambda_1(B, \ell_B; v_1, w) = \pi^2
\]
If $j=1$, then this establishes \eqref{lemdtree1}, because \eqref{rq} implies that
\[
	\lambda_1(G, \ell; D) \le \lambda_1(B, \ell_B; v_1,w) = \pi^2
\]
Therefore, we may assume that $j \ge 2$.
Let $A$ be the tree obtained from $G$ by deleting $B$ and $v_1$.
Let $\ell_A$ be the induced metric.
Define
\[
	D_A = D \cup \{ w \} \setminus \{ v_1 \}
\]
Note $D_A$ contains every leaf of $A$.
Also $|D_A|=|D|$ and $L(A, \ell_A) = j+|D|-3$, so by \eqref{ldin},
\[
\label{dnj12}
	\lambda_{j-1}(A, \ell_A; D_A) \le \pi^2
\]
Because of \eqref{rq}, the bounds \eqref{dnj11} and \eqref{dnj12} establish \eqref{lemdtree1}.

\emph{Case 2:}
In this case, we assume that there is a leaf edge $e$ in $G$ such that $1/2 \le \ell(e) \le 1$.
Without loss of generality, we may assume that $1/2 \le \ell(e_1) \le 1$.
View $e_1$ as a path subgraph and let $\ell_1$ be the induced metric.
Then
\[
\label{dnj21}
	\lambda_1(e_1, \ell_1; v_1) = \frac{\pi^2}{4\ell(e_1)^2} \le \pi^2
\]
Let $A$ be the tree obtained from $G$ by deleting $e_1$ and $v_1$.
Let $\ell_A$ be the induced metric and define $D_A = D \setminus \{ v_1 \}$.
Recall that, by assumption, there are no vertices in $G$ of degree two which are not in $D$.
Therefore, $D_A$ contains every leaf of $A$.
Also $|D_A| = |D|-1$ and $L(A, \ell_A) \ge j+|D|-3$, so by \eqref{ldin},
\[
\label{dnj22}
	\lambda_j(A, \ell_A; D_A) \le \pi^2
\]
By Lemma \ref{dirichletdelete}, the bounds \eqref{dnj21} and \eqref{dnj22} establish \eqref{lemdtree1}.

\emph{Case 3:}
In this case, we assume that $\ell(e) \le 1/2$ for every leaf edge $e$ and there is a pair of incident leaf edges.
Without loss of generality, we may assume that $e_1$ and $e_2$ are incident.
Let $w$ be the vertex incident to $e_1$ and $e_2$.
Let $A$ be the tree obtained from $G$ by deleting $e_1$, $e_2$, $v_1$, and $v_2$.
Let $\ell_A$ be the induced metric, and define
\[
	D_A = D \cup \{ w \} \setminus \{ v_1, v_2 \}
\]
Note that $D_A$ contains every leaf of $A$.
Moreover $|D_A|=|D|-1$ and $L(A, \ell_A) \ge j+|D|-3$.
By \eqref{ldin},
\[
	\lambda_j(A, \ell_A; D_A) \le \pi^2
\]
Then \eqref{rq} implies that
\[
	\lambda_j(G, \ell; D) \le \lambda_j(A, \ell_A; D_A) \le \pi^2
\]
This establishes \eqref{lemdtree1}.

\emph{Case 4:}
In this case, we assume that $\ell(e) \le 1/2$ for every leaf edge $e$ and there is no pair of incident leaf edges.
By Lemma \ref{degreetwo}, there is a vertex $w$ of degree two which is adjacent to a leaf.
Without loss of generality, we may assume that $w$ is adjacent to $v_1$.
By assumption, there are no vertices in $G$ of degree two which are not in $D$, so $w$ is in $D$.
Let $A$ be the graph obtained from $G$ by deleting $e_1$ and $v_1$.
Let $\ell_A$ be the induced metric and let $D_A=D \setminus \{ v_1 \}$.
Then $D_A$ contains every pendant vertex of $A$.
Also $|D_A|=|D|-1$ and $L(A, \ell_A) \ge j + |D| - 3$.
By \eqref{ldin},
\[
	\lambda_j(A, \ell_A; D_A) \le \pi^2
\]
Then \eqref{rq} implies that
\[
	\lambda_j(G, \ell; D) \le \lambda_j(A, \ell_A; D_A) \le \pi^2
\]
This establishes \eqref{lemdtree1}.
\end{proof}

Now we use Lemma \ref{lemdtree} to prove Lemma \ref{lemdir}.

\begin{proof}[Proof of Lemma 1.5]
If $\beta(G)=0$, then \eqref{lemdir1} follows from Lemma \ref{lemdtree}, while \eqref{lemdir2} is vacuous.
We complete the proof by induction on $\beta(G)$.
Fix a finite connected metric graph $(G, \ell)$ with at least one edge.
Fix a non-empty set of vertices $D$ which contains every pendant vertex.
We may assume that $\beta(G) \ge 1$.
Let $(A, \ell_A)$ be a finite connected metric graph with at least one edge.
Let $D_A$ be a non-empty set of vertices in $A$ which contains every pendant vertex.
By induction, we may assume that if $\beta(A) < \beta(G)$, then for $j \ge 1$,
\[
\label{ound24}
	\lambda_j(A, \ell_A; D_A) \le \frac{(j-2+2\beta(A)+|D_A|)^2 \pi^2}{L(A, \ell_A)^2}
\]

Since $\beta(G) \ge 1$, there is an edge in $G$ which can be deleted to obtain a finite connected graph with Betti number $\beta(G)-1$.
Form a graph $B$ from $G$ by subdividing this edge.
Let $v$ be the new vertex.
Let $\ell_B$ be a compatible metric on $B$.
Let $A$ be the graph obtained from $B$ by splitting $v$ into two pendant vertices $v_1$ and $v_2$.
Let $\ell_A$ be the induced metric.
Then $A$ is a finite connected graph and $\beta(A)=\beta(G)-1$.
Also $L(A, \ell_A) = L(G, \ell)$.
Define
\[
	D_A = D \cup \{ v_1, v_2 \}
\]
Then $D_A$ contains every pendant vertex of $A$, and $|D_A|=|D|+2$.
By \eqref{ound24}, for $j \ge 1$,
\[
\begin{split}
	\lambda_j(G, \ell; D) &\le \lambda_j(G, \ell; D \cup \{ v \} ) \\
		&= \lambda_j(A, \ell_A; D_A) \\
		&\le \frac{(j-2+2\beta(G)+|D|)^2 \pi^2}{L(G, \ell)^2} \\
\end{split}
\]
This establishes \eqref{lemdir1}.
Similarly, by \eqref{ound24}, for $j \ge 1$,
\[
\begin{split}
	\lambda_j(G, \ell) &\le \lambda_{j+1}(G, \ell; v ) \\
		&= \lambda_{j+1}(A, \ell_A; v_1, v_2 ) \\
		&\le \frac{(j-1+2\beta(G))^2 \pi^2}{L(G, \ell)^2} \\
\end{split}
\]
This establishes \eqref{lemdir2}.
\end{proof}

Now we can prove Theorem \ref{thmneu}.

\begin{proof}[Proof of Theorem 1.2]
Note that the case $p(G)=0$ follows from \eqref{lemdir2}.
We complete the proof of \eqref{thmneu1} by induction on $p(G)$.
Let $G$ be a finite connected graph.
We may assume $p(G) \ge 1$.
By induction, we may assume that if $A$ is a finite connected graph with $p(A) < p(G)$, then for $j \ge 1$,
\[
\label{bnin}
	\Lambda_j(A) \le \bigg( j-1 + 2\beta(A) + \frac{p(A)}{2} \bigg)^2 \pi^2
\]
Let $\ell$ be a metric on $G$.
By \eqref{smoothneumann}, we may assume there are no vertices of degree two in $G$.
By \eqref{scaleneumann}, we may assume $L(G, \ell)=2j-2+4\beta(G)+p(G)$.
It suffices to prove
\[
\label{thmneumanneq2}
	\lambda_j(G, \ell) \le \frac{\pi^2}{4}
\]
The case $\beta(G)=0$ and $p(G)=2$ follows from \eqref{smoothneumann} and \eqref{pathneumann}, so if $\beta(G)=0$, then we may assume that $p(G) \ge 3$.

We break the argument into two cases.
In the first case, we assume that $\ell(e) > 1$ for every pendant edge $e$.
In the second case, we assume that there is a pendant edge $e$ such that $\ell(e) \le 1$.
In both cases, we use the following notation.
Let $p=p(G)$ be the number of pendant vertices in $G$ and let $v_1,v_2, \ldots, v_p$ be the pendant vertices.
For each $i=1,2,\ldots,p$, let $e_i$ be the pendant edge of $G$ which is incident to $v_i$.

\emph{Case 1:}
In this case, we assume that $\ell(e) > 1$ for every pendant edge $e$.
Subdivide every pendant edge, and define a compatible metric so that the new pendant edges each have length one.
For each $i=1,2,\ldots,p$, let $w_i$ be the new vertex adjacent to $v_i$, and let $B_i$ be the new edge incident to $v_i$ and $w_i$.
View $B_i$ as a path subgraph and let $\ell_i$ be the induced metric.
For each $i=1,2,\ldots,p$,
\[
\label{uivi1}
	\lambda_1(B_i, \ell_i; w_i) = \frac{\pi^2}{4}
\]
Because of \eqref{rq} and \eqref{krq}, this establishes that
\[
	\lambda_{p-1}(G, \ell) \le \frac{\pi^2}{4}
\]
If $j \le p-1$, then this establishes \eqref{thmneumanneq2}.
Therefore, we may assume $j \ge p$.
Let $A$ be the graph obtained by deleting $B_1, B_2, \ldots, B_p$ and $v_1, v_2, \ldots, v_p$.
Let $\ell_A$ be the induced metric.
Note that $p(A)=p(G)$ and $\beta(A)=\beta(G)$.
Also,
\[
	L(A, \ell_A)=2j-2+4\beta(G)
\]
By Lemma \ref{lemdir},
\[
\label{jn1s}
	\lambda_{j-p+1}(A, \ell_A; w_1, w_2, \ldots, w_p) \le \frac{\pi^2}{4}
\]
Because of \eqref{rq} and \eqref{krq}, the bounds \eqref{uivi1} and \eqref{jn1s} establish \eqref{thmneumanneq2}.

\emph{Case 2:}
In this case, we assume that $\ell(e) \le 1$ for some pendant edge $e$.
Without loss of generality, we may assume that $\ell(e_1) \le 1$.
Let $A$ be the graph obtained from $G$ by deleting $e_1$ and $v_1$.
Let $\ell_A$ be the induced metric.
By assumption, there are no vertices of degree two in $G$, so $p(A)=p(G)-1$.
Also $\beta(A)=\beta(G)$.
Note that
\[
	L(A, \ell_A) \ge 2j-2+4 \beta(A) + p(A)
\]
Therefore, by Lemma \ref{neumanndelete} and \eqref{bnin},
\[
	\lambda_j(G, \ell) \le \lambda_j(A, \ell_A) \le \frac{\pi^2}{4}
\]
This establishes \eqref{thmneumanneq2}.
\end{proof}

Next we prove Theorem \ref{thmtree}.

\begin{proof}[Proof of Theorem 1.1]
By Theorem \ref{thmneu},
\[
\label{treepf1}
	\Lambda_j(G) \le \bigg( j-1+ \frac{p(G)}{2} \bigg)^2 \pi^2
\]
Let $S$ be a star graph with $p(G)$ edges.
By Example \ref{sharpstar}, we have
\[
\label{treepf2}
	\Lambda_j(S) \ge \bigg( j-1+\frac{p(G)}{2} \bigg)^2 \pi^2
\]
Note that $S$ can be obtained from $G$ by contracting edges.
By a continuity result established by Band and Levy \cite[Appendix A]{BL}, this implies that $\Lambda_j(G) \ge \Lambda_j(S)$.
Therefore \eqref{treepf1} and \eqref{treepf2} establish \eqref{thmtree1}.
\end{proof}

We conclude this section by proving Theorem \ref{thminforest}.

\begin{proof}[Proof of Theorem 1.3]
We first prove that
\[
\label{inforestpf1}
	\Lambda_1(G) \le \Big( 1 + \beta(G) \Big)^2 \pi^2
\]
Let $\ell$ be a metric on $G$.
By \eqref{scaleneumann}, we may assume that
\[
	L(G, \ell) = 1 + \beta(G)
\]
It suffices to prove that
\[
\label{inforestpf2}
	\lambda_1(G, \ell) \le \pi^2
\]
Let $n=n(G)$ be the order of $G$, and let $v_1, v_2, v_3, \ldots, v_n$ be the vertices of $G$.
By assumption there are $n-1$ vertices which induce a tree in $G$.
Without loss of generality, we may assume the vertices $v_2, v_3, \ldots, v_n$ induce a tree in $G$.
Note that the degree of $v_1$ is $\beta(G)+1$.
Let $A$ be the tree obtained from $G$ by splitting $v_1$ into $\beta(G)+1$ leaves.
Let $\ell_A$ be the induced metric on $A$.
Let $D$ be the set of leaves in $A$.
Then by \eqref{rq} and \eqref{krq},
\[
	\lambda_1(G, \ell) \le \lambda_2(G, \ell; v_1) = \lambda_2(A, \ell_A; D)
\]
Note that $|D|=\beta(G)+1$.
Also $L(A, \ell_A) = L(G, \ell)$.
Hence, by Lemma \ref{lemdtree},
\[
	\lambda_2(A, \ell_A; D) \le \frac{|D|^2 \pi^2}{L(A, \ell_A)^2} = \pi^2
\]
This proves \eqref{inforestpf2}, establishing \eqref{inforestpf1}.

Let $B$ be a dipole graph with $\beta(G)+1$ edges.
By Example \ref{dipole},
\[
\label{inforestpf3}
	\Lambda_1(B) \ge \Big( 1 + \beta(G) \Big)^2 \pi^2
\]
Note that $B$ can be obtained from $G$ by contracting edges.
By a continuity result established by Band and Levy \cite[Appendix A]{BL}, this implies that $\Lambda_1(G) \ge \Lambda_1(B)$.
Therefore \eqref{inforestpf1} and \eqref{inforestpf3} establish \eqref{thminforest1}.
\end{proof}

\section{The Spectral Gap of $K_4$}

In this section we prove Lemma \ref{lemclaw} and Theorem \ref{thmk4}.
We first prove Lemma \ref{lemclaw}.

\begin{proof}[Proof of Lemma 1.6]
By \eqref{scaledirichlet}, we may assume that $\ell(e_1)+\ell(e_3)=10/9$.
By Lemma \ref{shorten}, we may assume that $\ell(e_2)=\ell(e_3)$.
It suffices to prove that
\[
\label{clawpf}
	\lambda_1(K_{1,3}, \ell; D) \le \pi^2
\]
For each $i=1,2,3$, let $v_i$ be the leaf of $K_{1,3}$ incident to $e_i$.
Let $w$ be the vertex of degree three in $K_{1,3}$.
If $\ell(e_1) \ge 1$, then view $e_1$ as a path subgraph and let $\ell_1$ be the induced metric.
Then \eqref{clawpf} follows, because
\[
	\lambda_1(K_{1,3}, \ell; D) \le \lambda_1(e_1, \ell_1; v_1, w) \le \pi^2
\] 
Therefore, we may assume that $\ell(e_1) < 1$.
Identify the edge $e_1$ with the interval $[0,\ell(e_1)]$ so that $v_1$ is identified with zero and $w$ is identified with $\ell(e_1)$.
Similarly, identify each edge $e_2$ and $e_3$ with the interval $[0, \ell(e_3)]$ so that $v_2$ and $v_3$ are each identified with zero and $w$ is identified with $\ell(e_3)$.
Define $f$ in $H_0^1(K_{1,3}, \ell; D)$ by
\[
	f(t) =
	\begin{cases}
		\sin (\pi \ell(e_3) ) \sin (\pi t) & t \text{ in } e_1 \sim [0, \ell(e_1)] \\
		\sin (\pi \ell(e_1) ) \sin (\pi t) & t \text{ in } e_2 \sim [0, \ell(e_3)] \\
		\sin (\pi \ell(e_1) ) \sin (\pi t) & t \text{ in } e_3 \sim [0, \ell(e_3)] \\
	\end{cases}
\]
Note that $f \ge 0$ because $\ell(e_1) < 1$.
We claim that
\[
\label{clawclaim}
	\sin (\pi \ell(e_3)) \cos (\pi \ell(e_1)) + 2 \sin (\pi \ell(e_1)) \cos (\pi \ell(e_3)) < 0
\]
Assuming this claim, integration by parts shows that
\[
	\frac{\int_{K_{1,3}} |f'|^2}{\int_{K_{1,3}} |f|^2} \le \pi^2
\]
Therefore \eqref{clawpf} follows.
It remains to prove \eqref{clawclaim}.
We have the identity
\[
\label{clawclaimpf1}
	\sin x \cos y + 2 \sin y \cos x = (1/2) \Big( 3 \sin(x+y) - \sin(x-y) \Big)
\]
Furthermore,
\[
\label{clawclaimpf2}
	3 \sin \Big( \pi(\ell(e_1)+\ell(e_3)) \Big) = 3 \sin \bigg( \frac{10 \pi}{9} \bigg) < -1 \le \sin( \pi(\ell(e_1)-\ell(e_3)))
\]
Now \eqref{clawclaimpf1} and \eqref{clawclaimpf2} establish the claim \eqref{clawclaim}, completing the proof.
\end{proof}

We prove Theorem \ref{thmk4} by breaking the argument into several cases.
Each of the following lemmas treat various special cases.
For the rest of the section, we use the following notation.
Denote the vertices of $K_4$ by $w$, $x$, $y$, and $z$.
Let $wx$ be the edge incident to $w$ and $x$.
Similarly, denote the other edges by $wy$, $wz$, $xy$, $xz$, and $yz$.

\begin{Lemma}
\label{k4long}
Let $\ell$ be a metric on $K_4$ such that $L(K_4, \ell)=4$.
Assume there is an edge in $K_4$ of length greater than or equal to one.
Then
\[
\label{k4long1}
	\lambda_1(K_4, \ell) \le \pi^2
\]
\end{Lemma}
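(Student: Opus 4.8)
\emph{Proof proposal.}
The hypothesis $L(K_4,\ell)=4$ already fixes the scaling, so by the variational characterization \eqref{krq} it suffices to exhibit a two–dimensional subspace $W\subseteq H^1(K_4,\ell)$ with $\int_{K_4}|f'|^2\le\pi^2\int_{K_4}|f|^2$ for all $f\in W$; this yields $\lambda_1(K_4,\ell)\le\pi^2=16\pi^2/L(K_4,\ell)^2$. Two shapes of test space will be convenient: either $W=\mathrm{span}\{1,g\}$ for a nonconstant $g$ with $\int_{K_4}g=0$ (so the Rayleigh quotient on $W$ coincides with that of $g$), or $W=\mathrm{span}\{g_1,g_2\}$ where $g_1,g_2$ agree with the two first Neumann eigenfunctions of a long cycle on that cycle. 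One may alternatively pass through interlacing in the form $\lambda_1(K_4,\ell)\le\lambda_2(K_4,\ell;v)$ and then build test functions in $H^1_0(K_4,\ell;v)$. Since $K_4$ is vertex–transitive we may assume the edge of length at least one is $wx$; write $a=\ell(wx)\ge1$.

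The plan then splits on $a$. If $a\ge 2$, the function $g$ equal to $\sin(2\pi t/a)$ on $wx$ (with $t$ the arclength from $w$) and $0$ on the remaining five edges is continuous, satisfies $\int_{K_4}g=0$, and has Rayleigh quotient $4\pi^2/a^2\le\pi^2$; taking $W=\mathrm{span}\{1,g\}$ proves \eqref{k4long1}. If $1\le a<2$, the remaining five edges sum to $4-a\in(2,3]$, and I would bring in a long cycle: the two Hamiltonian four–cycles through $wx$, namely $wx,xy,yz,zw$ and $wx,xz,zy,yw$, have lengths summing to $a+4+\ell(yz)\ge 5$, so after relabelling $\{y,z\}$ the cycle $C=wx\cup xy\cup yz\cup zw$ has $|C|\ge\tfrac52>2$, with its two chords $wy$ and $xz$ of total length $4-|C|\le\tfrac32$. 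I would take $W$ spanned by the two functions agreeing with $\cos(2\pi s/|C|)$ and $\sin(2\pi s/|C|)$ on $C$ and, on each chord, with the $H^1$ extension of the prescribed endpoint values that minimizes $\int(|g'|^2-\pi^2|g|^2)$. On $C$ every $f\in W$ is a Neumann eigenfunction of eigenvalue $4\pi^2/|C|^2$, so $\int_C(|f'|^2-\pi^2|f|^2)=(4\pi^2/|C|^2-\pi^2)\int_C|f|^2\le0$ with a strictly negative coefficient, and the remaining task is to check that the two short chords contribute nonpositively as well, using that they are short together with the freedom in the common phase of the eigenfunctions on $C$.

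I expect the genuine obstacle to be exactly this last verification when $a$ is close to $2$, so that $|C|$ is near $\tfrac52$ and the chords are near $\tfrac12$: there the chord contribution for the pure cosine/sine profile need not be nonpositive, because the arc of $C$ joining the ends of a chord has length at least $a\ge1$ and so the eigenfunctions of $C$ cannot be nearly constant along it. In those subcases I would instead keep the long edge $wx$ in play directly — for instance a half–period sine on $wx$, whose Rayleigh quotient is $\pi^2/a^2\le\pi^2$, together with interlacing $\lambda_1(K_4,\ell)\le\lambda_2(K_4,\ell;w)$ — and estimate the leftover, which is star–like, by the sharp bound of Lemma~\ref{lemclaw} rather than the cruder Lemma~\ref{lemdir}. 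The structural reason a naive split ``edge $wx$ plus the diamond $K_4\setminus wx$'' fails is that the diamond with Dirichlet conditions at $w$ and $x$ can have first eigenvalue strictly larger than $4\pi^2/L^2$; hence the long edge and the rest of the graph must be made to cooperate inside a single test function, and organizing the finitely many subcases so that the Rayleigh–quotient inequality closes in each is the bulk of the work.
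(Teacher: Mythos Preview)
Your case $a\ge 2$ is correct, but the case $1\le a<2$ has a genuine gap that you flag and do not close. On a chord of length $L<1$ with prescribed endpoint values $p,q$, the optimal extension (solving $-g''=\pi^2 g$) gives
\[
\int_0^L\bigl(|g'|^2-\pi^2|g|^2\bigr)\;=\;\frac{\pi\bigl((p^2+q^2)\cos(\pi L)-2pq\bigr)}{\sin(\pi L)},
\]
which is strictly positive whenever $p$ and $q$ have opposite signs. With two chords and only one free phase on $C$, you cannot in general force both contributions nonpositive; and your fallback through interlacing and Lemma~\ref{lemclaw} is only a sketch, not an argument.

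The paper avoids this entirely by pairing the long edge not with a Hamiltonian cycle \emph{through} it but with its \emph{opposite} (non-adjacent) edge. Say $\ell(xz)\ge 1$; the opposite edge is $wy$, and the remaining four edges $wx,xy,yz,zw$ form a $4$-cycle. If $\ell(xz)+\ell(wy)\ge 2$, contract those four cycle edges; the result is a one-vertex graph with two loops of total length $\ge 2$, and \eqref{kkmmlj} with $m=2$ gives $\lambda_1\le 4\pi^2/L^2\le\pi^2$ directly via Lemma~\ref{contract}. If instead $\ell(xz)+\ell(wy)\le 2$, the $4$-cycle has length $\ge 2$, hence first eigenvalue $\le\pi^2$; choose its eigenfunction $\phi$ with phase so that $\phi(w)=\phi(y)$, extend \emph{constantly} across $wy$, and extend across the long chord $xz$ by half-sines vanishing at its midpoint. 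The constant extension contributes nothing to the Dirichlet energy; the half-sine extension has Rayleigh quotient $\pi^2/\ell(xz)^2\le\pi^2$ precisely because $\ell(xz)\ge 1$. Positive and negative parts of $\phi$ then give the two-dimensional test space. The idea you were missing is that only \emph{one} chord needs the phase freedom: the long edge takes care of itself because it is long.
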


\begin{proof}
Without loss of generality, we may assume that $\ell(xz) \ge 1$.
We break the argument into two cases.
In the first case we assume $\ell(xz)+\ell(wy) \ge 2$.
In the second case we assume $\ell(xz)+\ell(wy) \le 2$.

\emph{Case 1:}
In this case, we assume $\ell(xz)+\ell(wy) \ge 2$.
Let $G$ be the graph obtained from $K_4$ by contracting the other four edges $xy$, $yz$, $wx$, and $wz$.
Let $\ell_G$ be the induced metric on $G$.
By Lemma \ref{contract},
\[
	\lambda_1(K_4, \ell) \le \lambda_1(G, \ell_G)
\]
Moreover $G$ has two edges and $L(G, \ell_G) \ge 2$.
Therefore \eqref{k4long1} follows from the bound \eqref{kkmmlj}.

\emph{Case 2:}
In this case, we assume $\ell(xz) + \ell(wy) \le 2$.
Let $G$ be the graph obtained from $K_4$ by deleting the edges $xz$ and $wy$.
Then $G$ is a cycle graph and $L(G, \ell_G) \ge 2$, so
\[
	\lambda_1(G, \ell_G) \le \pi^2
\]
Moreover, there is a corresponding eigenfunction $\phi$ in $H^1(G, \ell_G)$ such that $\phi(w)=\phi(y)$.
Let $A$ be the graph obtained from $K_4$ by subdividing $xz$.
Let $v$ be the new vertex and let $vx$ and $vz$ be the new edges incident to $x$ and $z$, respectively.
Let $\ell_A$ be the compatible metric on $A$ such that
\[
	\ell_A(vx)=\ell_A(vz)=\frac{\ell(xz)}{2}
\]
Extend $\phi$ to $wy$ so that $\phi$ is constant over $wy$.
Identify $vx$ with the interval $[0, \ell(xz)/2]$ so that $v$ is identified with zero and $x$ is identified with $\ell(xz)/2$.
Extend $\phi$ to $vx$ so that if $t$ is in $vx \sim [0 ,\ell(xz)/2]$, then
\[
	\phi(t) = \phi(x) \sin \bigg( \frac{\pi t}{\ell(xz)} \bigg)
\]
Also identify $vz$ with the interval $[0, \ell(xz)/2]$ so that $v$ is identified with zero and $z$ is identified with $\ell(xz)/2$.
Extend $\phi$ to $vz$ so that if $t$ is in $vz \sim [0 ,\ell(xz)/2]$, then
\[
	\phi(t) = \phi(z) \sin \bigg( \frac{\pi t}{\ell(xz)} \bigg)
\]
Note that $\phi$ attains positive and negative values, and let $\phi^+$ and $\phi^-$ be the positive and negative parts of $\phi$, respectively.
That is, define $\phi^+= \max(\phi,0)$ and $\phi^-=-\min(\phi,0)$.
Then $\phi^+$ and $\phi^-$ are in $H^1(K_4, \ell)$.
Let $V$ be the subspace of $H^1(K_4, \ell)$ generated by $\phi^+$ and $\phi^-$.
Then $V$ is two-dimensional, and
\[
	\lambda_1(K_4, \ell) \le \max_{f \in V} \frac{ \int_{K_4} |f'|^2 }{ \int_{K_4} |f|^2 } \le \pi^2
\]
This establishes \eqref{k4long1}.
\end{proof}

\begin{Lemma}
\label{k4ef}
Let $\ell$ be a metric on $K_4$ such that $L(K_4, \ell)=4$.
Assume $\ell(xy)+\ell(xz)+\ell(yz) \ge 2$.
Also assume $\ell(wy) \ge 1/2$ and $\ell(wz) \ge 1/2$.
Then
\[
\label{k4ef1}
	\lambda_1(K_4, \ell) \le \pi^2
\]
\end{Lemma}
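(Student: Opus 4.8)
The plan is to exhibit a two-dimensional subspace of $H^1(K_4,\ell)$ on which every Rayleigh quotient is at most $\pi^2$; by the variational characterization \eqref{krq} this gives $\lambda_1(K_4,\ell)\le\pi^2$. The test functions will be built from a sign-changing eigenfunction of the ``long'' triangle $xyz$, extended carefully across the vertex $w$.

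Write $L_T=\ell(xy)+\ell(xz)+\ell(yz)$, and let $T$ be the cycle formed by the edges $xy$, $xz$, $yz$. By \eqref{smoothneumann} the Neumann eigenvalue problem on $T$ agrees with the one on the circle of circumference $L_T$, so its first nonzero eigenvalue $\mu=4\pi^2/L_T^2$ satisfies $\mu\le\pi^2$ by hypothesis, with a two-dimensional eigenspace. Let $\psi$ be the eigenfunction in that space with $\psi(x)=0$; in the arc-length parameter $s$ on $T$ based at $x$ it is, up to scaling, $\psi=\sin(2\pi s/L_T)$. Thus $\psi$ is smooth as a function on the circle $T$ --- so at each of $x,y,z$ the two outward derivatives of $\psi$ along the triangle edges sum to zero --- and $\psi$ changes sign on $T$.

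Now extend $\psi$ to $\phi\in H^1(K_4,\ell)$ as follows: put $\phi=\psi$ on $T$; put $\phi\equiv 0$ on $wx$, which is consistent once we set $\phi(w)=0$, since $\psi(x)=0$; and on $wy$, identified with $[0,\ell(wy)]$ so that $0$ corresponds to $w$, put $\phi(t)=B\sin(\sqrt V\,t)$ with $B=\psi(y)/\sin(\sqrt V\,\ell(wy))$, where $V$ is any number with $V\le\pi^2$ and $\sqrt V\,\ell(wy)\in[\pi/2,\pi)$ --- such a $V$ exists precisely because $\ell(wy)\ge 1/2$; do the same on $wz$ using $\ell(wz)\ge 1/2$. (If $\psi(y)=0$ take $\phi\equiv 0$ on $wy$, and similarly at $z$.) Then $\phi$ is continuous, agrees with $\psi$ on $T$ hence changes sign, and on each edge $e$ satisfies $-\phi''=c_e\phi$ with $0\le c_e\le\pi^2$. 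The constraint $\sqrt V\,\ell(wy)\ge\pi/2$ is used only to ensure $\cos(\sqrt V\,\ell(wy))\le 0$, so that the outward derivative of $\phi$ at $y$ along $wy$, which equals $-B\sqrt V\cos(\sqrt V\,\ell(wy))$, has the same sign as $\psi(y)$; likewise at $z$.

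Finally set $\phi^+=\max(\phi,0)$ and $\phi^-=-\min(\phi,0)$. Integrating $\int_{K_4}|(\phi^+)'|^2$ by parts over $\{\phi>0\}$ and using $-\phi''=c_e\phi\le\pi^2\phi$ there, one gets $\int_{K_4}|(\phi^+)'|^2\le\pi^2\int_{K_4}(\phi^+)^2+\sum_v\bigl(-\phi(v)\sum_{e\sim v}\partial_e\phi(v)\bigr)$, the outer sum over vertices with $\phi(v)>0$ (the analogous contributions at zeros of $\phi$ interior to an edge vanish). This correction is $\le 0$: it vanishes at $w$ and $x$, where $\phi=0$; at $y$ it equals $-\psi(y)\,\partial_{wy}\phi(y)\le 0$ by the sign property above, the two triangle edges at $y$ contributing nothing because $\psi$ is smooth on $T$; and similarly at $z$. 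Hence $\int_{K_4}|(\phi^+)'|^2\le\pi^2\int_{K_4}(\phi^+)^2$, and the same argument applies to $\phi^-$. Since $\phi^+$ and $\phi^-$ are nonzero with essentially disjoint supports, $\operatorname{span}(\phi^+,\phi^-)$ is two-dimensional with Rayleigh quotient at most $\pi^2$ throughout, so \eqref{k4ef1} follows from \eqref{krq}. The step I expect to require the most care is the bookkeeping of these vertex boundary terms, and in particular the realization that one should make $\phi$ vanish at \emph{both} $w$ and $x$ --- which forces the triangle eigenfunction with $\psi(x)=0$ together with $\phi\equiv 0$ on $wx$ --- since this is exactly what annihilates the boundary contributions at those two vertices and leaves only the two manageable contributions at $y$ and $z$.
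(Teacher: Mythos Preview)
Your proof is correct and follows the same strategy as the paper: take the cycle eigenfunction on the triangle $xyz$ vanishing at $x$, extend by zero on $wx$ and by sine profiles on $wy$ and $wz$, then use the positive and negative parts as a two-dimensional test space. The paper makes the simplest choice $\sqrt V\,\ell(wy)=\pi/2$ (so the outward derivative at $y$ along $wy$ is exactly zero rather than merely of the right sign), but this is a cosmetic simplification of your argument, not a different idea.
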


\begin{proof}
Let $G$ be the graph obtained from $K_4$ by deleting $wx$, $wy$, and $wz$.
Let $\ell_G$ be the induced metric.
Then $G$ is a cycle graph and $L(G, \ell_G) \ge 2$, so
\[
	\lambda_1(G, \ell_G) \le \pi^2
\]
Moreover, there is a corresponding eigenfunction $\phi$ in $H^1(G, \ell_G)$ such that $\phi(x)=0$.
Extend $\phi$ to $wx$ so that $\phi$ is identically zero over $wx$.
Identify $wy$ with the interval $[0, \ell(wy)]$ so that $w$ is identified with zero and $y$ is identified with $\ell(wy)$.
Extend $\phi$ to $wy$ so that if $t$ is in $wy \sim [0 ,\ell(wy)]$, then
\[
	\phi(t) = \phi(y) \sin \bigg( \frac{\pi t}{2 \ell(wy)} \bigg)
\]
Identify $wz$ with the interval $[0, \ell(wz)]$ so that $w$ is identified with zero and $z$ is identified with $\ell(wz)$.
Extend $\phi$ to $wz$ so that if $t$ is in $wz \sim [0 ,\ell(wz)]$, then
\[
	\phi(t) = \phi(z) \sin \bigg( \frac{\pi t}{2 \ell(wz)} \bigg)
\]
Note that $\phi$ attains positive and negative values, and let $\phi^+$ and $\phi^-$ be the positive and negative parts of $\phi$, respectively.
Then $\phi^+$ and $\phi^-$ are in $H^1(K_4, \ell)$.
Let $V$ be the subspace of $H^1(K_4, \ell)$ generated by $\phi^+$ and $\phi^-$.
Then
\[
	\lambda_1(K_4, \ell) \le \max_{f \in V} \frac{ \int_{K_4} |f'|^2 }{ \int_{K_4} |f|^2 } \le \pi^2
\]
This proves \eqref{k4ef1}.
\end{proof}

\begin{Lemma}
\label{k4def}
Let $\ell$ be a metric on $K_4$ such that $L(K_4, \ell)=4$.
Assume
\[
\label{k4def1}
	\max \bigg( \ell(wx), \ell(wy), \ell(wz) \bigg) < 1/2
\]
Then
\[
	\lambda_1(K_4, \ell) \le \pi^2
\]
\end{Lemma}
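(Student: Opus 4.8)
The plan is as follows. First, if some edge of $K_4$ has length at least one, then $\lambda_1(K_4,\ell)\le\pi^2$ by Lemma \ref{k4long}, so from now on assume every edge has length less than one. Let $P=\ell(xy)+\ell(xz)+\ell(yz)$ be the perimeter of the triangle on $x$, $y$, $z$. Since each spoke $\ell(wx),\ell(wy),\ell(wz)$ is less than $1/2$ and $L(K_4,\ell)=4$, we have $P>5/2$; and since every edge is shorter than one, $P<3$. The three spokes sum to $4-P\in(1,3/2)$ and each is less than $1/2$, so the median spoke is larger than $7/4-P/2$. Relabel $x$, $y$, $z$ so that $wx$ is a shortest spoke; then $\ell(wy)$ and $\ell(wz)$ both lie in $(7/4-P/2,\,1/2)$.

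Next, let $T=K_4\setminus\{wx,wy,wz\}$ be the triangle on $x,y,z$, a cycle graph of length $P$, so that $\lambda_1(T,\ell|_T)=\mu:=4\pi^2/P^2<\pi^2$, with a two‑dimensional eigenspace spanned by $\cos$ and $\sin$ of $2\pi s/P$, where $s$ is arclength along $T$ from $x$. Choose the eigenfunction $\phi=A\sin(2\pi s/P)$ with $\phi(x)=0$; its zeros on $T$ are $x$ and the antipodal point, and because $\ell(xy),\ell(xz)<1<P/2$ the vertices $y$ and $z$ fall in opposite nodal domains. Normalize $\phi$ so that $\phi(y)>0$; then $\phi(z)<0$, each nodal arc of $T$ has $\int\phi^2=A^2P/4$, and $\phi(y)^2,\phi(z)^2\le A^2$. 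Now extend $\phi$ to all of $K_4$: put $\phi\equiv 0$ on $wx$ (so $\phi(w)=0$); identify $wy$ with $[0,\ell(wy)]$ with $w$ at $0$ and set $\phi(t)=\phi(y)\sin(\pi t/(2\ell(wy)))$; likewise $\phi(t)=\phi(z)\sin(\pi t/(2\ell(wz)))$ on $wz$. This $\phi$ lies in $H^1(K_4,\ell)$ and changes sign, so its positive and negative parts $\phi^+$, $\phi^-$ span a two‑dimensional subspace $V\subseteq H^1(K_4,\ell)$; by \eqref{krq} it suffices to show every element of $V$ has Rayleigh quotient at most $\pi^2$, and since $\phi^+$ and $\phi^-$ have disjoint supports this reduces to the two inequalities $\int_{K_4}|(\phi^\pm)'|^2\le\pi^2\int_{K_4}|\phi^\pm|^2$.

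The crux is the inequality for $\phi^+$ (that for $\phi^-$ is identical with $wy$, $\phi(y)$ replaced by $wz$, $\phi(z)$). Here $\phi^+$ is supported on one nodal arc of $T$ — where $\phi$ is the first Dirichlet eigenfunction, of eigenvalue $\mu$, with $L^2$‑mass $A^2P/4$ — together with the edge $wy$; and the quarter‑sine piece on $wy$ has Rayleigh quotient $\pi^2/(4\ell(wy)^2)$, which is \emph{larger} than $\pi^2$ because $\ell(wy)<1/2$. So one cannot estimate edge by edge; instead one must spend the spectral slack $\pi^2-\mu$ of the triangle arc as a budget. A direct computation shows $\int_{K_4}|(\phi^+)'|^2\le\pi^2\int_{K_4}|\phi^+|^2$ is equivalent to
\[
	\phi(y)^2\,\frac{1-4\ell(wy)^2}{8\ell(wy)}\;\le\;\Big(1-\frac{4}{P^2}\Big)\,\frac{A^2P}{4},
\]
and using $\phi(y)^2\le A^2$ this follows from
\[
	\frac{1-4\ell(wy)^2}{8\ell(wy)}\;\le\;\frac{P}{4}-\frac{1}{P}.
\]
The left‑hand side is decreasing in $\ell(wy)$, so it is enough to verify this with $\ell(wy)$ replaced by the lower bound $7/4-P/2$; clearing denominators turns it into the quadratic inequality $28u^2-37u+7\le 0$ in $u:=7/4-P/2$, whose roots are $\tfrac{37\pm\sqrt{585}}{56}$, so that it holds on an interval strictly containing $[1/4,1/2]$. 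Since $u$ ranges over $(1/4,1/2)$ as $P$ ranges over $(5/2,3)$, the inequality holds, and with it $\lambda_1(K_4,\ell)\le\pi^2$.

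I expect the main obstacle to be exactly this last estimate. Two choices are forced on us: the zero value must be placed at the \emph{shortest} spoke (so that only the two \emph{longer} spokes — each provably above $7/4-P/2>1/4$ — receive quarter‑sine extensions), and one must use both the lower bound on the median spoke and the lower bound $5/2$ on the triangle perimeter to make the budget cover the deficit $\pi^2/(4\ell^2)-\pi^2$. The preliminary reduction to edges shorter than one (via Lemma \ref{k4long}) is essential: without it a spoke could be arbitrarily short and the estimate would fail. The quadratic inequality holds only by a narrow margin as $P\to 3$, which is consistent with that regime being the boundary of the hypothesis.
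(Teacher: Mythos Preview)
Your proof is correct and takes a genuinely different route from the paper's.

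After the common reduction (via Lemma~\ref{k4long}) to the case where every edge has length below one, the paper decomposes $K_4$ into two claws centred at $x$ and $y$: it subdivides $xy$ at a point $v$ chosen so that $\ell(vx)+\ell(xz)=10/9$, then applies Lemma~\ref{lemclaw} to the claw $\{vx,xz,wx\}$ and, after a short contradiction argument using Lemma~\ref{dirichletdelete} and several numerical bounds, to the claw $\{vy,yz,wy\}$; the two Dirichlet eigenfunctions on these claws, extended by zero over $wz$, furnish the required two--dimensional test space. Thus the paper's argument rests essentially on the ad hoc $(10/9)^2\pi^2$ estimate of Lemma~\ref{lemclaw}.

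Your approach instead reuses the construction of Lemma~\ref{k4ef}: take the cycle eigenfunction on the triangle $xyz$ vanishing at the vertex incident to the \emph{shortest} spoke, extend by zero along that spoke and by quarter--sines along the other two. Where Lemma~\ref{k4ef} needs those two spokes to have length at least $1/2$ so that each quarter--sine has Rayleigh quotient at most $\pi^2$, you observe that the triangle has perimeter $P>5/2$ and hence eigenvalue $4\pi^2/P^2$ strictly below $\pi^2$; you then spend this slack to absorb the deficit from the short spokes. The combinatorial bound $\ell(wy),\ell(wz)>7/4-P/2$ (obtained from $\ell(wx)<1/2$ and the constraint on the sum) is exactly what makes the one--variable inequality close, and the final quadratic check $28u^2-37u+7\le 0$ on $u\in(1/4,1/2)$ is correct as stated.

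What each approach buys: the paper's decomposition highlights Lemma~\ref{lemclaw} as a reusable tool, but the numerical thresholds ($10/9$, $25/18$, $7/18$, \dots) are somewhat opaque. Your argument is more elementary and self--contained --- it dispenses with Lemma~\ref{lemclaw} entirely, so that the whole proof of Theorem~\ref{thmk4} can be carried out without that lemma --- and it makes the trade--off (triangle slack versus spoke deficit) explicit. The choice to place the zero at the shortest spoke is the key idea that makes the budget sufficient; without it the bound on the median spoke would not be available and the quadratic inequality would fail.
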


\begin{proof}
Without loss of generality, we may assume that
\[
\label{k4def2}
	\ell(wz) \le \min( \ell(wx), \ell(wy) )
\]
and
\[
\label{k4def3}
	\ell(yz) + \ell(wy) \le \ell(xz) + \ell(wx)
\]
By Lemma \ref{k4long}, we may assume that
\[
\label{k4def4}
	\max \bigg( \ell(xy), \ell(xz), \ell(yz) \bigg) < 1
\]
By \eqref{k4def1} and \eqref{k4def4},
\[
\label{k4def5}
	\min \bigg( \ell(xy), \ell(xz), \ell(yz) \bigg) > 1/2
\]
By \eqref{k4def1}, \eqref{k4def3}, and \eqref{k4def4},
\[
\label{k4def6}
	\ell(xz) + \ell(wx) > 5/4
\]
Additionally, by \eqref{k4def1} and \eqref{k4def4},
\[
\label{k4def7}
	\ell(xy) + \ell(xz) > 3/2
\]
Let $G$ be the graph obtained from $K_4$ by subdividing the edge $xy$.
Let $v$ be the new vertex.
Let $vx$ and $vy$ be the new edges which are incident to $x$ and $y$, respectively.
By abuse of notation, let $\ell$ denote a compatible metric on $G$ which satisfies
\[
\label{k4def8}
	\ell(vx) + \ell(xz) = 10/9
\]
Note that \eqref{k4def1} and \eqref{k4def8} establish
\[
\label{k4def9}
	\ell(vy) + \ell(yz) > 25/18
\]

Let $A$ be the subgraph of $G$ consisting of the edges $xz$, $vx$, and $wx$.
Let $B$ be the subgraph of $G$ consisting of the edges $vy$, $yz$, and $wy$.
Let $\ell_A$ and $\ell_B$ be the induced metrics on $A$ and $B$, respectively.
By \eqref{k4def6}, \eqref{k4def8}, and Lemma \ref{lemclaw},
\[
	\lambda_1(A, \ell_A; v, w, z) \le \pi^2
\]
To complete the proof, it suffices to show that
\[
\label{k4def10}
	\lambda_1(B, \ell_B; v, w, z) \le \pi^2
\]
Suppose not.
Then by Lemma \ref{lemclaw} and \eqref{k4def9},
\[
\label{k4def11}
	\ell(yz) + \ell(wy) < 10/9
\]
Furthermore, by Lemma \ref{dirichletdelete} and \eqref{k4def5},
\[
\label{k4def12}
	\ell(vy) + \ell(wy) < 1
\]
Hence, by \eqref{k4def1}, \eqref{k4def4}, \eqref{k4def8}, and \eqref{k4def12},
\[
\label{k4def14}
	\ell(wz) > 7/18
\]
Also, by \eqref{k4def1}, \eqref{k4def8}, and \eqref{k4def12},
\[
\label{k4def15}
	\ell(yz) > 8/9
\]
By \eqref{k4def11} and \eqref{k4def15},
\[
\label{k4def16}
	\ell(wy) < 2/9
\]
Note that \eqref{k4def14} and \eqref{k4def16} imply that $\ell(wy) < \ell(wz)$.
By \eqref{k4def2}, this is a contradiction.
This establishes \eqref{k4def10}, completing the proof.
\end{proof}

\begin{Lemma}
\label{k4df}
Let $\ell$ be a metric on $K_4$ such that $L(K_4, \ell)=4$.
Assume $\ell(wx) + \ell(wz) \le 1$ and $\ell(wx) \ge 1/2$.
Then
\[
\label{k4df1}
	\lambda_1(K_4, \ell) \le \pi^2
\]
\end{Lemma}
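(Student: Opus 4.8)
The plan is to proceed just as in Lemma \ref{k4ef}, constructing a suitable two-dimensional trial space in $H^1(K_4, \ell)$ by gluing an eigenfunction on a cycle subgraph to sine arcs on the remaining edges. First I would exploit the hypotheses to identify a cycle of length at least two. Since $L(K_4, \ell) = 4$ and $\ell(wx) + \ell(wz) \le 1$, the four edges not in $\{wx, wz\}$ have total length at least $3$; the two hypotheses $\ell(wx) \ge 1/2$ and $\ell(wx) + \ell(wz) \le 1$ also force $\ell(wz) \le 1/2$, which by Lemma \ref{k4long} lets me assume no edge has length $\ge 1$, so in particular $\ell(wx) < 1$. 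I expect to want the cycle $C$ to be the triangle $xyz$ together with nothing else incident to $w$, or a four-cycle through $w$; the natural choice is to delete $wx$, $wy$, $wz$ and work on the triangle $G = xyz$, provided $\ell(xy) + \ell(xz) + \ell(yz) \ge 2$. That inequality holds because $\ell(wx) + \ell(wy) + \ell(wz) = 4 - (\ell(xy)+\ell(xz)+\ell(yz))$ and, using $\ell(wx) + \ell(wz) \le 1$ together with $\ell(wy) < 1$ (from Lemma \ref{k4long}), the star edges sum to less than $2$.

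Next I would take $\phi$ to be a first nonconstant eigenfunction on the cycle graph $(G, \ell_G)$, so $\lambda_1(G, \ell_G) \le \pi^2$ by the cycle estimate $\Lambda_1 = 4\pi^2$ on a cycle of length $\ge 2$ (equivalently \eqref{kkmmlj} applied after contracting), and I would choose the phase of $\phi$ so that it vanishes at a convenient vertex. To extend $\phi$ across the pendant-like edges $wx$, $wy$, $wz$ meeting $w$, I need all three extensions to match a common value at $w$. The cleanest device is to arrange $\phi(w)$ to be defined by the extension: extend along each of $wx$, $wy$, $wz$ from its endpoint in $\{x,y,z\}$ by a quarter-sine arc $\phi(t) = \phi(\cdot)\sin(\pi t / (2\ell(\cdot)))$ that equals zero at $w$, which forces $\phi(w) = 0$ and is consistent. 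For this to give Rayleigh quotient $\le \pi^2$ on each added edge I need each such edge to have length $\ge 1/2$; that is exactly why the hypothesis $\ell(wx) \ge 1/2$ appears, and I would need the analogous bounds on $\ell(wy)$ and $\ell(wz)$ — here is where a case split or a further normalization (invoking Lemma \ref{shorten} or the earlier lemmas) will be required. If some star edge is too short, I would instead delete that short edge, or contract it, reducing to a graph already handled (a cycle, a dipole via \eqref{kkmmlj}, or a case covered by Lemma \ref{k4long} or Lemma \ref{k4ef}).

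Finally, with $\phi \in H^1(K_4, \ell)$ taking both signs, I would set $\phi^\pm = \max(\pm\phi, 0)$, note $\phi^+, \phi^- \in H^1(K_4, \ell)$ with disjoint supports, and let $V = \operatorname{span}\{\phi^+, \phi^-\}$, a genuinely two-dimensional subspace. Because the supports are disjoint and on each piece the Rayleigh quotient is at most $\pi^2$ (eigenfunction part on the cycle, sine arcs on the star edges), every $f \in V$ satisfies $\int |f'|^2 \le \pi^2 \int |f|^2$, so \eqref{krq} gives $\lambda_1(K_4, \ell) \le \pi^2$, which is \eqref{k4df1}. I expect the main obstacle to be verifying that one can always route the deletion so that the surviving cycle has length $\ge 2$ \emph{and} each star edge retained for the sine-arc extension has length $\ge 1/2$; handling the residual configurations where this fails — likely by appealing to Lemma \ref{k4long}, Lemma \ref{k4ef}, or a contraction to a two-edge graph — is where the real work lies, but the hypotheses $\ell(wx)+\ell(wz)\le 1$ and $\ell(wx)\ge 1/2$ are evidently tailored to make exactly one such routing available.
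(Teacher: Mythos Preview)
Your approach has a genuine gap. You are trying to reproduce the proof of Lemma~\ref{k4ef}: take a first eigenfunction on the triangle $xyz$, normalize it to vanish at one vertex, extend by zero along the incident star edge, and extend by quarter-sine arcs along the other two star edges. For the Rayleigh quotient on a sine arc of length $\ell$ to be at most $\pi^2$ you need $\ell\ge 1/2$. But the hypotheses of this lemma force $\ell(wz)\le 1/2$ (from $\ell(wx)\ge 1/2$ and $\ell(wx)+\ell(wz)\le 1$) and say nothing at all about $\ell(wy)$. So if you make $\phi$ vanish at $z$ and extend by zero on $wz$, you still need $\ell(wy)\ge 1/2$, which is not given; and in the actual application inside the proof of Theorem~\ref{thmk4} one has in fact $\ell(wy)<1/2$ as well. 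None of the fallback lemmas you list rescues this: Lemma~\ref{k4long} needs an edge of length $\ge 1$, Lemma~\ref{k4ef} needs two star edges of length $\ge 1/2$, and a contraction to a two-edge graph does not obviously give a graph of length $\ge 2$. Your final sentence asserts that the hypotheses are ``tailored to make exactly one such routing available,'' but they are not; they are tailored for a different argument.

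The paper's proof avoids sine arcs entirely and works with Dirichlet eigenvalues on trees. One deletes the short edge $wz$ and splits each of $w$ and $z$ into two leaves, obtaining a tree $G$ with four Dirichlet leaves $w_x,w_y,z_x,z_y$; then
\[
\lambda_1(K_4,\ell)\le \lambda_2(K_4,\ell;w,z)\le \lambda_2(G,\ell_G;w_x,w_y,z_x,z_y).
\]
Now delete the pendant edge $wx$ via Lemma~\ref{dirichletdelete}: since $\ell(wx)\ge 1/2$, the pendant contribution is $\lambda_1(e,\ell_e;v)=\pi^2/(4\ell(wx)^2)\le\pi^2$. The remaining tree $A$ has three Dirichlet leaves and total length $L(A,\ell_A)=4-\ell(wx)-\ell(wz)\ge 3$, so Lemma~\ref{lemdir} (with $j=2$, $|D|=3$, $\beta=0$) gives $\lambda_2(A,\ell_A;D_A)\le 9\pi^2/L(A,\ell_A)^2\le\pi^2$. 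This is exactly how the two hypotheses are consumed: $\ell(wx)\ge 1/2$ controls the deleted pendant, and $\ell(wx)+\ell(wz)\le 1$ makes the residual tree long enough for the Dirichlet tree estimate.
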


\begin{proof}
Let $G$ be the tree obtained from $K_4$ by deleting $wz$ and by splitting $w$ and $z$ each into two leaves.
Let $w_x$, $w_y$, $z_x$, and $z_y$ denote the leaves of $G$ which are incident to $wx$, $wy$, $xz$, and $yz$, respectively.
Let $\ell_G$ be the induced metric.
Then
\[
	\lambda_1(K_4, \ell) \le \lambda_2(K_4, \ell; w, z) \le \lambda_2(G, \ell_G; w_x, w_y, z_x, z_y)
\]
Let $A$ be the tree obtained from $G$ by deleting $wx$.
Let $\ell_A$ be the induced metric.
Since $\ell(wx) \ge 1/2$, Lemma \ref{dirichletdelete} implies that
\[
	\lambda_2(G, \ell_G; w_x, w_y, z_x, z_y) \le \max \bigg( \lambda_2(A, \ell_A; w_y, z_x, z_y), \pi^2 \bigg)
\]
Since $\ell(wx)+\ell(wz) \le 1$, we have $L(A, \ell_A) \ge 3$.
Therefore, by Lemma \ref{lemdir},
\[
	\lambda_2(A, \ell_A; w_y, z_x, z_y) \le \frac{9 \pi^2}{L(A, \ell_A)^2} \le \pi^2
\]
This establishes \eqref{k4df1}.
\end{proof}

\begin{Lemma}
\label{k4abcd}
Let $\ell$ be a metric on $K_4$.
Assume $\ell(xy)+\ell(xz)+\ell(yz) \ge 2$ and $\ell(wx) \ge \ell(wy) \ge \ell(wz)$.
Also assume $\ell(wx)+\ell(wz) \ge 1$ and $\ell(wz) < 1/2$.
Then
\[
\label{k4abcd1}
	\lambda_1(K_4, \ell) \le \pi^2
\]
\end{Lemma}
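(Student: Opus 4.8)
The plan is to follow the pattern of the earlier lemmas in this section. By \eqref{scaleneumann} we may normalize so that $L(K_4,\ell)=4$, reducing the claim to $\lambda_1(K_4,\ell)\le\pi^2$, and by Lemma~\ref{k4long} we may then assume every edge of $K_4$ has length less than one. Under these assumptions the triangle on $x,y,z$ is a cycle of length $\ell(xy)+\ell(xz)+\ell(yz)\ge 2$, so its first eigenvalue equals $4\pi^2/(\ell(xy)+\ell(xz)+\ell(yz))^2\le\pi^2$ and carries an eigenfunction that can be rotated to vanish at any prescribed one of $x,y,z$. Also $\ell(wx)>1/2$ (from $\ell(wx)+\ell(wz)\ge 1$ and $\ell(wz)<1/2$), hence $\ell(wx)+\ell(wy)\ge 1$; and since the three triangle edges are each less than one and sum to at least two, $\ell(xy)+\ell(xz)>1$ and at least one of $\ell(xy),\ell(xz)$ exceeds $1/2$. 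I would split into the cases $\ell(wy)\ge 1/2$ and $\ell(wy)<1/2$.

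If $\ell(wy)\ge 1/2$, I would argue as in the proof of Lemma~\ref{k4ef}, but using the triangle eigenfunction $\phi$ chosen to vanish at $z$ (the endpoint of the short edge $wz$) rather than at $x$. Extend $\phi$ by zero along $wz$, and along $wx$ and $wy$ by $\phi(x)\sin(\pi t/2\ell(wx))$ and $\phi(y)\sin(\pi t/2\ell(wy))$ respectively, with $t=0$ at $w$. These pieces have wavenumber at most $\pi$ because $\ell(wx),\ell(wy)\ge 1/2$, and have vanishing derivative at $x$ and at $y$; since $\phi$ vanishes at both $w$ and $z$, the Kirchhoff balances needed to control the positive and negative parts $\phi^{+},\phi^{-}$ all hold, so $\phi^{+},\phi^{-}$ span a two-dimensional subspace of $H^1(K_4)$ with all Rayleigh quotients at most $\pi^2$, and \eqref{krq} gives $\lambda_1(K_4,\ell)\le\pi^2$.

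The substantive case is $\ell(wy)<1/2$, so that $\ell(wy),\ell(wz)<1/2$ while $\ell(wx)>1/2$. Here the triangle eigenfunction cannot be continued through both short spokes by quarter-sines, so I would build the required two-dimensional test space from eigenfunctions of subgraphs that straddle the long edge $wx$. A first such mode is the ground state of the ``$Y$-graph'' got from the star at $x$ with edges $xy,xz,wx$ and Dirichlet conditions at $y$, $z$ and $w$; its secular equation is $\cot(\kappa\ell(xy))+\cot(\kappa\ell(xz))+\cot(\kappa\ell(wx))=0$, the left side is strictly decreasing on $(0,\pi]$ since all three lengths are less than one, and at $\kappa=\pi$ it is negative, because $\ell(xy)+\ell(xz)\ge 1$ forces $\cot(\pi\ell(xy))+\cot(\pi\ell(xz))\le 0$ while $\ell(wx)>1/2$ makes $\cot(\pi\ell(wx))<0$. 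Hence this ground-state eigenvalue is always strictly below $\pi^2$, and extending its eigenfunction by zero along $wy$ and $wz$ supplies one test function with Rayleigh quotient below $\pi^2$.

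The main obstacle is the second test function. Since $\ell(wy),\ell(wz)<1/2$, no mode supported near $w$ can have Rayleigh quotient close to $\pi^2$, so the second mode must again use the long edge $wx$ (and perhaps one of the long pendant edges $xy$, $xz$); one then either subdivides $wx$ at a balancing point so that the two modes occupy disjoint portions of it — this succeeds precisely when $\ell(wx)$ is large enough relative to $\ell(wy),\ell(wz)$, the point at which $\ell(wx)+\ell(wz)\ge 1$ enters — or, when $\ell(wx)$ is comparatively small, one must estimate the cross terms between two overlapping modes directly. I expect this step to require a further case split according to the sizes of $\ell(xy),\ell(xz),\ell(wx)$, together with appeals to Lemma~\ref{shorten} and, as in the proof of Lemma~\ref{k4def}, to the sharper claw estimate of Lemma~\ref{lemclaw}.
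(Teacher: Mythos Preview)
Your opening normalization $L(K_4,\ell)=4$ is not legitimate here: the hypotheses are absolute inequalities on edge lengths, not scale-invariant, and the conclusion is already $\lambda_1\le\pi^2$; in particular the appeal to Lemma~\ref{k4long} is unjustified. That said, your case $\ell(wy)\ge 1/2$ does not actually use $L=4$ and is correct---it is the proof of Lemma~\ref{k4ef} with the roles of $x$ and $z$ swapped. The genuine gap is in the case $\ell(wy)<1/2$. Your single test function, the Dirichlet-star ground state extended by zero, does not change sign, so its positive and negative parts do not span a two-dimensional space; you need an independent second function with Rayleigh quotient at most $\pi^2$, and you leave this unresolved. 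Any mode concentrated near $w$ sees only the short edges $wy,wz$ and cannot have Rayleigh quotient $\le\pi^2$, so the sketch of ``subdividing $wx$ at a balancing point'' or ``estimating cross terms'' does not obviously close.

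The paper's argument avoids the case split and this difficulty altogether. By Lemma~\ref{shorten} one may reduce to the boundary situation $\ell(wy)=\ell(wz)$ and $\ell(wx)+\ell(wz)=1$ (shortening only increases $\lambda_1$ and does not touch the triangle). Now take the triangle eigenfunction $\phi$ with $\phi(y)=\phi(z)$, not one vanishing at a vertex. Since $\ell(wx)>1/2$, subdivide $wx$ at the point $v$ with $\ell(vx)=1/2$; then $\ell(vw)+\ell(wy)=\ell(vw)+\ell(wz)=1/2$ exactly. Extend $\phi$ by $\phi(x)\sin\pi t$ on $vx$ and by $\phi(y)\sin\pi t$ along each of the two length-$1/2$ paths $v\,w\,y$ and $v\,w\,z$; continuity at $w$ holds because $\phi(y)=\phi(z)$, and every piece has wavenumber at most $\pi$. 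The positive and negative parts of this single function then span the required two-dimensional subspace.
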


\begin{proof}
By Lemma \ref{shorten}, we may assume $\ell(wy)=\ell(wz)$ and $\ell(wx)+\ell(wz)=1$.
Let $G$ be the graph obtained from $K_4$ by deleting $wx$, $wy$, and $wz$.
Let $\ell_G$ be the induced metric.
Then $G$ is a cycle graph and $L(G, \ell_G) \ge 2$.
Therefore,
\[
	\lambda_1(G, \ell_G) \le \pi^2
\]
Moreover, there is a corresponding eigenfunction $\phi$ in $H^1(G, \ell_G)$ such that $\phi(y)=\phi(z)$.
Note that $\ell(wx) > 1/2$, and subdivide $wx$.
Let $v$ denote the new vertex.
Let $vx$ and $vw$ denote the new edges which are incident to $x$ and $w$, respectively.
By abuse of notation, let $\ell$ be the compatible metric such that $\ell(vx)=1/2$.
Note that
\[
	\ell(vw) + \ell(wy) = \ell(vw) + \ell(wz) = 1/2
\]
Identify $vx$ with the interval $[0,1/2]$ so that $v$ is identified with zero and $x$ is identified with 1/2. 
Extend $\phi$ to $vx$ so that if $t$ is in $vx \sim [0,1/2]$, then
\[
	\phi(t) = \phi(x) \sin \pi t
\]
Identify $vw$ with the interval $[0, \ell(vw)]$ so that $v$ is identified with zero and $w$ is identified with $\ell(vw)$. 
Also identify $wy$ with the interval $[\ell(vw), 1/2]$ so that $w$ is identified with $\ell(vw)$ and $y$ is identified with $1/2$.
Extend $\phi$ to $vw \cup wy$ so that if $t$ is in $vw \cup wy \sim [0,\ell(vw)] \cup [\ell(vw), 1/2]$, then
\[
	\phi(t) = \phi(y) \sin \pi t
\]
Similarly, identify $wz$ with the interval $[\ell(vw), 1/2]$ so that $w$ is identified with $\ell(vw)$ and $z$ is identified with $1/2$.
Extend $\phi$ to $wz$ so that if $t$ is in $wz \sim [\ell(vw), 1/2]$, then
\[
	\phi(t) = \phi(y) \sin \pi t
\]
Let $\phi^+$ and $\phi^-$ be the positive and negative parts of $\phi$, respectively.
We may identify $\phi^+$ and $\phi^-$ with functions in $H^1(K_4, \ell)$.
Let $V$ be the subspace of $H^1(K_4, \ell)$ generated by $\phi^+$ and $\phi^-$.
Then
\[
	\lambda_1(K_4, \ell) \le \max_{f \in V} \frac{ \int_{K_4} |f'|^2 }{ \int_{K_4} |f|^2 } \le \pi^2
\]
This establishes \eqref{k4abcd1}.
\end{proof}

Now we can conclude the article by proving Theorem \ref{thmk4}.

\begin{proof}[Proof of Theorem 1.4]
We first prove that
\[
\label{k4upper}
	\Lambda_1(K_4) \le 16 \pi^2
\]
Let $\ell$ be a metric on $K_4$ such that $L(K_4, \ell)=4$.
By \eqref{scaledirichlet}, it suffices to prove that
\[
\label{thmk4pf1}
	\lambda_1(K_4, \ell) \le \pi^2
\]
There must be three edges in $K_4$ which form a cycle of length greater than or equal to two.
Without loss of generality, we may assume that
\[
	\ell(xy)+\ell(xz)+\ell(yz) \ge 2
\]
Suppose at least two edges of $wx$, $wy$, and $wz$ have length greater than or equal to 1/2.
Without loss of generality, we may assume that $\ell(wy) \ge 1/2$ and $\ell(wz) \ge 1/2$.
Then Lemma \ref{k4ef} establishes \eqref{thmk4pf1}.
Therefore, we may assume that at least two edges of $wx$, $wy$, and $wz$ have length less than 1/2.
Without loss of generality, we may assume that $\ell(wz) \le \ell(wy) < 1/2$.
If $\ell(wx) < 1/2$, then Lemma~\ref{k4def} establishes \eqref{thmk4pf1}.
Therefore, we may assume that $\ell(wx) \ge 1/2$.
If $\ell(wx) + \ell(wz) \le 1$, then Lemma \ref{k4df} yields \eqref{thmk4pf1}.
If $\ell(wx) + \ell(wz) \ge 1$, then applying Lemma \ref{k4abcd} completes the proof of \eqref{thmk4pf1}, establishing \eqref{k4upper}.

Let $A$ be a dipole graph with four edges.
By Example \ref{dipole},
\[
\label{k4lower}
	\Lambda_1(A) \ge 16 \pi^2
\]
Note $A$ can be obtained from $K_4$ by contracting two non-incident edges.
By a continuity result established by Band and Levy \cite[Appendix A]{BL}, this implies that $\Lambda_1(K_4) \ge \lambda_1(A)$.
Therefore \eqref{k4upper} and \eqref{k4lower} establish \eqref{thmk41}.
\end{proof}

\begin{bibdiv}
\begin{biblist}

\bib{BL}{article}{
	title={Quantum graphs which optimize the spectral gap},
	author={R. Band},
	author={G. Levy},
	journal={preprint. arXiv:1608.00520}
}

\bib{BK2}{article}{
	title={Dependence of the spectrum of a quantum graph on vertex conditions and edge lengths},
	author={G. Berkolaiko},
	author={P. Kuchment},
	journal={Spectral geometry, Proc. Sympos. Pure Math.},
	volume={84},
	year={2012},
	pages={117-137}
}

\bib{BK}{book}{
	title={Introduction to Quantum Graphs},
	author={G. Berkolaiko},
	author={P. Kuchment},
	publisher={American Mathematical Society}
	year={2013}
}

\bib{BRV}{article}{
	title={Shape optimization problems for metric graphs},
	author={G. Buttazzo},
	author={B. Ruffini},
	author={B. Velichkov},
	journal={ESAIM Control Optim. Calc. Var.},
	volume={20},
	year={2014},
	pages={1-22}
}

\bib{DPR}{article}{
	title={The first eigenvalue of the $p$-Laplacian on quantum graphs},
	author={L. M. Del Pezzo},
	author={J. D. Rossi},
	journal={Anal. Math. Phys.},
	year={to appear}
}

\bib{DH}{article}{
	title={On semiclassical and universal inequalities for eigenvalues of quantum graphs},
	author={S. Demirel},
	author={E. M. Harrell II},
	journal={Rev. Math. Phys.},
	volume={22},
	year={2010},
	pages={305-329}
}

\bib{EJ}{article}{
	title={On the ground state of quantum graphs with attractive $\delta$-coupling},
	author={P. Exner},
	author={M. Jex},
	journal={Phys. Lett. A},
	volume={376},
	year={2012},
	pages={713-717}
}

\bib{F}{article}{
	title={Extremal properties of eigenvalues for a metric graph},
	author={L. Friedlander},
	journal={Ann. Inst. Fourier},
	volume={55},
	pages={199-211},
	year={2005}
}

\bib{KKTK}{article}{
	title={Schr\"odinger operators on graphs: Symmetrization and Eulerian cycles},
	author={G. Karreskog},
	author={P. Kurasov},
	author={I. Trygg Kupersmidt},
	journal={Proc. Amer. Math. Soc.},
	volume={144},
	year={2016},
	pages={1197-1207}
}

\bib{KKMM}{article}{
	title={On the spectral gap of a quantum graph},
	author={J. B. Kennedy},
	author={P. Kurasov},
	author={G. Malenova},
	author={D. Mugnolo},
	journal={Ann. Henri Poincar\'e},
	volume={17},
	year={2016},
	pages={2439-2473}
}

\bib{KMN}{article}{
	title={Spectral gap for quantum graphs and their edge connectivity},
	author={P. Kurasov},
	author={G. Malenova},
	author={S. Naboko},
	journal={J. Phys. A},
	volume={46},
	pages={275309}
	
}

\bib{KN}{article}{
	title={Rayleigh estimates for differential operators on graphs},
	author={P. Kurasov},
	author={S. Naboko},
	journal={J. Spectr. Theory},
	volume={4},
	pages={211-219},
	year={2014}
}

\bib{N}{article}{
	title={Spectre des r\'eseaux topologiques finis},
	author={S. Nicaise},
	journal={Bull. Sci. Math., II. S\'er},
	volume={111},
	pages={401-413},
	year={1987}
}

\bib{R}{article}{
	title={Eigenvalue estimates for the Laplacian on a metric tree},
	author={J. Rohleder},
	journal={Proc. Amer. Math. Soc.},
	year={to appear}
	pages={arXiv:1602.03864}
}

\bib{S}{article}{
	title={On the spectrum of the Laplacian on regular metric trees},
	author={M. Solomyak},
	journal={Waves Random Media},
	volume={14},
	year={2004},
	pages={S155-S171}
}

\end{biblist}
\end{bibdiv}

\end{document}